\theoremstyle{definition}
\newtheorem{example}{Example}
\newtheorem{corollary}{Corollary}
\newtheorem{proposition}{Proposition}
\newtheorem{remark}{Remark}
\newtheorem{identity}{Identity}
\title{Enumeration of Restricted Words and Linear Recurrence Equations}
\author{Milan Janji\'c}
\date{\today}
\begin{document}
\maketitle
\begin{center}Department for Mathematics and Informatics, University of Banja
Luka,\end{center}
 \begin{center}Republic of Srpska, BA\end{center}

\begin{abstract}
 In previous papers, for an arithmetical function $f_0$, we defined functions $f_m$ and $c_m$ and designated numbers of  restricted words over a finite alphabet counted by these functions. In this paper, we examine the reverse problem for five specific types of restricted words. Namely, we find the initial function $f_0$ such that $f_m$ and $c_m$ enumerate these words. In each case, we derive explicit formulas for $f_m$ and $c_m$.

 Fibonacci, Merssen, Pell,  Jacosthal, Tribonacci, and Padovan numbers all appear as values of $f_m$, so we obtain new formulas for these numbers. Also, we
  combinatorially  derive  explicit formulas for the solutions of five types of homogenous linear recurrence equations.
\end{abstract}

\section{Introduction} This paper is a continuation of the investigations of
the problem of restricted words enumeration from the author's  previous papers~\cite{ja1,ja2,ja3},
where two functions $f_m$ and $c_m$ were defined as follows. For an initial arithmetic function $f_0$, the function $f_m,(m\geq 1)$ is the $m^\text{th}$ invert transform of $f_0$. The function $c_m(n,k)$ was defined as
\begin{equation}\label{cmnk}
c_m(n,k)=\sum_{i_1+i_2+\cdots+i_k=n}f_{m-1}(i_1)\cdot f_{m-1}(i_2)\cdots f_{m-1}(i_k),\end{equation}
where the sum is  over positive $i_1,i_2,\ldots,i_k$.
For $m\geq 1$, the following formula holds:
\begin{equation}\label{suma1}f_m(n)=\sum_{k=1}^nc_m(n,k).\end{equation}
The functions $f_m$ and $c_m$ depend only on the initial function $f_0$, and are related to the enumeration of weighted compositions. Namely, if weights are $\{f_{m-1},f_{m-2}(2),\ldots\}$, then $f_m(n)$ is the number of all weighted compositions of $n$, and $c_m(n,k)$ is the number of weighted compositions of $n$ into $k$ parts.

In Janji\'c~\cite{ja1,ja2,ja3}, weighted compositions were related to
restricted words over a finite alphabet. For a given initial function $f_0$, we investigated restricted words counted by $f_m$ and $c_m$.   In this paper, we reverse the problem. Namely, for a particular type of restricted words, we first find the initial function $f_0$ which count such words. We then derive formulas for $f_m$ and $c_m$ and give its combinatorial meanings in terms of restricted words.

We restate~\cite[Propositions 12]{ja3}, which will be used frequently in the paper.
\begin{proposition}\label{alf}
 Assume that $f_{0}(1)=1$ and $m>1$. Assume next  that, for $n\geq 1$, we have $f_{m-1}(n)$
words of length $n-1$ over a finite alphabet $\alpha$. Let  $x$ be a letter which is not in $\alpha$. Then, $c_m(n,k)$ is the number of words of length $n-1$ over the alphabet $\alpha\cup\{x\}$ in which $x$ appears exactly $k-1$ times.
\end{proposition}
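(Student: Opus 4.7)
The plan is to set up an explicit bijection (or equivalently, a direct counting argument) between the words described by the proposition and the tuples summed in the definition~\eqref{cmnk} of $c_m(n,k)$.

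First I would fix a word $w$ of length $n-1$ over $\alpha\cup\{x\}$ in which $x$ appears exactly $k-1$ times. The $k-1$ occurrences of $x$ partition $w$ into $k$ blocks $B_1,B_2,\ldots,B_k$ of $\alpha$-letters (reading left to right, including possibly empty blocks at either end and between consecutive $x$'s). Writing $j_r$ for the length of $B_r$, we have $j_r\geq 0$ and
\[
j_1+j_2+\cdots+j_k=(n-1)-(k-1)=n-k.
\]
Now set $i_r=j_r+1$. Then $i_r\geq 1$ and $i_1+i_2+\cdots+i_k=n$, so the block-length vector corresponds to exactly one term in the sum~\eqref{cmnk}.

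Next I would count, for a fixed composition $(i_1,\ldots,i_k)$ of $n$ into positive parts, how many words $w$ produce it. Since each block $B_r$ is an arbitrary word of length $i_r-1$ over $\alpha$, the hypothesis of the proposition says there are $f_{m-1}(i_r)$ choices (here the case $i_r=1$ corresponds to an empty block, and one checks that $f_{m-1}(1)=1$ by induction from $f_0(1)=1$ via~\eqref{suma1}, so the count is consistent). The choices for different blocks are independent, so the number of words producing the composition $(i_1,\ldots,i_k)$ is $f_{m-1}(i_1)\cdots f_{m-1}(i_k)$.

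Summing over all compositions of $n$ into $k$ positive parts gives exactly the right-hand side of~\eqref{cmnk}, which is $c_m(n,k)$. The only subtle point, and the one I would state carefully, is the normalization $f_{m-1}(1)=1$: without it, empty blocks would not be counted with weight $1$ and the shift $i_r=j_r+1$ would fail to match the definition of $c_m$. Everything else is a routine translation between the position-of-$x$ decomposition and the compositional sum.
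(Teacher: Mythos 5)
Your argument is correct and is essentially the intended one: splitting the word at the $k-1$ occurrences of $x$ into $k$ blocks, shifting each block length by one to obtain a composition $i_1+\cdots+i_k=n$ into positive parts, and weighting each block by $f_{m-1}(i_r)$ reproduces exactly the defining sum \eqref{cmnk} for $c_m(n,k)$, with your remark that $f_{m-1}(1)=1$ (forced by $f_0(1)=1$ via \eqref{suma1}) correctly handling the empty blocks. Note that the present paper gives no proof of this proposition (it is restated from the author's earlier work), so there is nothing to contrast with; your block-decomposition argument is the standard proof of that cited result.
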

We also restate the result in~\cite[Proposition 6]{ja3}.
The following formula holds:
\begin{equation}\label{cm1}c_m(n,k)=\sum_{i=k}^n(m-1)^{i-k}{i-1\choose k-1}c_1(n,i),\;(1\leq k\leq n).\end{equation}
 We consider the following five types of restricted words:
\begin{itemize}
\item[1.]
Words over the alphabet $\{0,1,\ldots,a-1,\ldots,m+a-1\}$,
such that no two adjacent letters from $\{0,1,\ldots,a-1\}$ are the same.
\item[2.]
Words over the alphabet
$\{0,1,\ldots,a-1,\ldots,a+m-1\}$ such that letters $0,1,\ldots,a-1$
avoid a run of odd length.
\item[3.] Words over the alphabet $\{0,1,\ldots,b-1,\ldots,m+a-1\}$
  avoiding subwords of the form $0i,(i=1,\ldots,b)$.
 \item[4.]
 Words over the  alphabet $\{0,1,\ldots,m+1\}$ such that $0$ and $1$ appear only as  subwords of the form $1i$, where $i$ is a run of zeros of length at least $1$.
 \item[5.]
 Words over the  alphabet $\{0,1,\ldots,m+1\}$ in which $0$ appears only in a run of even length, and $1$ appears only in a run the length of which is divisible by $3$.
\end{itemize}
We note that the initial function $f_0$ is defined by a linear homogenous recurrence in all cases.
 \section{Case 1} To solve the problem posed in Case 1, we  consider the following linear recurrence:
\begin{equation*}f_0(1)=1,f_0(a)=a,f_0(n+2)=(a-1)f_0(n+1),(n\geq 1),
\end{equation*}
where $a>0$.
It is easy to see that
\begin{equation*}f_0(n)=a(a-1)^{n-2},(n\geq 2).\end{equation*}
\begin{remark}
This formula appears in Birmajer at al.~\cite[Example 17]{bir}. Also, the case $a=1$ is considered in~\cite[Example 18]{ja3}.
\end{remark}
The function $f_0$ has the following combinatorial interpretation:
\begin{proposition} The number $f_0(n)$ is the number of words of length $n-1$ over the alphabet $\{0,1,\ldots,a-1\}$  such that no two adjacent letters are the same.
\end{proposition}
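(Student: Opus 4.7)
The plan is to verify the claim by showing that the number of such words satisfies the same recurrence (equivalently, the same closed form) as $f_0$. Denote by $w(n)$ the number of words of length $n-1$ over $\{0,1,\ldots,a-1\}$ with no two adjacent letters equal. Since the closed form $f_0(n)=a(a-1)^{n-2}$ for $n\geq 2$ has already been derived from the recurrence, it suffices to show that $w$ agrees with this formula, together with $w(1)=1$.

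First I would dispose of the base cases. For $n=1$ the only word of length $0$ is the empty word, so $w(1)=1=f_0(1)$. For $n=2$ the words of length $1$ are the single letters, giving $w(2)=a=f_0(2)$. For $n\geq 3$ I would argue directly: there are $a$ choices for the first letter, and then each subsequent letter can be any of the $a-1$ letters distinct from its immediate predecessor, these choices being independent. This yields
\[
w(n)=a(a-1)^{n-2}=f_0(n),
\]
as required. Alternatively, one can establish the recurrence $w(n+2)=(a-1)w(n+1)$ for $n\geq 1$ bijectively: given a valid word of length $n$, a valid word of length $n+1$ is obtained in exactly $a-1$ ways by appending any letter different from the current last letter, and every valid word of length $n+1\geq 2$ arises uniquely in this way.

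There is no real obstacle here; the statement is essentially the standard enumeration of proper colourings of a path on $n-1$ vertices with $a$ colours. The only minor care needed is the treatment of the small cases $n=1,2$, where the ``no two adjacent letters are the same'' condition is vacuous, and matching these to the prescribed initial values $f_0(1)=1$ and $f_0(2)=a$ of the recurrence.
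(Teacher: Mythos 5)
Your proposal is correct and follows essentially the same route as the paper: check the base cases $n=1,2$ and then show the word count satisfies $w(n+2)=(a-1)w(n+1)$ by extending a valid word with any of the $a-1$ letters distinct from its neighbour (the paper prepends, you append — an immaterial difference), or equivalently read off the closed form $a(a-1)^{n-2}$ directly.
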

\begin{proof} We have $f_0(1)=1$, since only the empty word has length $0$.
 Also, $f_0(2)=a$, since a word of length $1$ may consist of an arbitrary letter.
 To obtain a word of length $n+2$, for $n>0$, we need to insert $a-1$ letters in front of each word of length $n+1$.
\end{proof}
As an immediate consequence of Janji\'c~\cite[Corollary 9]{ja1}, we obtain
\begin{corollary}\label{co1}
For $m\geq 0$, the following recurrence holds:
\begin{gather*}f_m(1)=1,f_m(2)=m+a,\\f_m(n+2)=(m+a-1)f_m(n+1)+mf_m(n),(n\geq 1).\end{gather*}
\end{corollary}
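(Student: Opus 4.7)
The plan is to derive the recurrence from the generating function of $f_0$ and then apply the cited Corollary~9 of~\cite{ja1}. That corollary, for an initial $f_0$ with a rational generating function, supplies a rational generating function for each invert transform $f_m$, so the recurrence can be read off its denominator immediately.

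First I compute the ordinary generating function $F_0(x)=\sum_{n\ge 1}f_0(n)x^n$. Using $f_0(1)=1$ together with the closed form $f_0(n)=a(a-1)^{n-2}$ for $n\ge 2$ already established in the excerpt, a single geometric sum gives
\[F_0(x)=x+\frac{ax^2}{1-(a-1)x}=\frac{x(1+x)}{1-(a-1)x}.\]

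Next, the $m$-th invert transform corresponds to the generating-function identity
\[F_m(x)=\frac{F_0(x)}{1-mF_0(x)}=\frac{x(1+x)}{1-(m+a-1)x-mx^2}.\]
Clearing the denominator and comparing coefficients, the equations obtained from $x^1$ and $x^2$ give $f_m(1)=1$ and $f_m(2)=m+a$, while for $n\ge 1$ the coefficient of $x^{n+2}$ yields exactly
\[f_m(n+2)=(m+a-1)f_m(n+1)+mf_m(n),\]
which is the claimed recurrence.

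The only subtle point is matching conventions: one must check that the author's definition of the $m$-th invert transform in~\cite{ja1} agrees with the operational identity $F_m=F_0/(1-mF_0)$ used above, which is precisely the content that Corollary~9 packages for us. Beyond that verification, the derivation is a two-line geometric-series manipulation together with a coefficient comparison, so I do not anticipate any real obstacle.
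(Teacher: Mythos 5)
Your derivation is correct: with $f_0(1)=1$ and $f_0(n)=a(a-1)^{n-2}$ for $n\ge 2$ one indeed gets $F_0(x)=\frac{x(1+x)}{1-(a-1)x}$, and since $f_m$ is by definition the $m$-th invert transform of $f_0$, induction on $m$ gives $F_m(x)=\frac{F_0(x)}{1-mF_0(x)}=\frac{x(1+x)}{1-(m+a-1)x-mx^2}$, from which the stated initial values and recurrence follow by comparing coefficients. The paper itself offers no computation at all: it simply declares the corollary an immediate consequence of Corollary 9 of \cite{ja1}, which packages precisely this conversion of a linear recurrence for $f_0$ into one for $f_m$. So your route is the same in substance (it is essentially how such a corollary is proved), but it is self-contained, which is a virtue; the only inaccuracy is your description of Corollary 9 as being the identity $F_m=F_0/(1-mF_0)$ --- that identity is really just the iterated definition of the invert transform stated in the introduction, while Corollary 9 of \cite{ja1} records the resulting recurrence transformation. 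This mislabeling does not affect the validity of your argument, since everything you use follows from the definition of $f_m$ given in the paper.
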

We next prove that $f_m$ counts the desired words.
\begin{proposition}
The number $f_m(n)$ is the number  of words of length $n-1$ over
the alphabet $\{0,1,\ldots,a-1,a,\ldots,m+a-1\}$,
such that no two adjacent letters from $\{0,1,\ldots,a-1\}$ are the same.
\end{proposition}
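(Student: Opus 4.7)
The plan is to argue by induction on $m$, using Proposition~\ref{alf} as the engine of the inductive step. The base case $m=0$ is exactly the previous proposition, where $f_0(n)$ counts words of length $n-1$ over $\{0,1,\ldots,a-1\}$ with no two adjacent letters equal.

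For the inductive step, I would assume that $f_{m-1}(n)$ counts words of length $n-1$ over the alphabet $\alpha=\{0,1,\ldots,m+a-2\}$ such that no two adjacent letters from $\{0,1,\ldots,a-1\}$ are equal. I would then set $x=m+a-1$, the ``new'' letter not in $\alpha$, and invoke Proposition~\ref{alf} to conclude that $c_m(n,k)$ counts words of length $n-1$ over $\alpha\cup\{x\}$ in which $x$ appears exactly $k-1$ times and whose maximal $x$-free blocks satisfy the inductive restriction. Summing over $k$ and using~(\ref{suma1}) would yield the statement, provided I can argue that the block-wise restriction is equivalent to the global restriction for the full alphabet.

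The only step that needs care is precisely this compatibility check. I would observe that the new letter $x=m+a-1$ does not belong to $\{0,1,\ldots,a-1\}$, so if two adjacent positions in the global word both carry letters from $\{0,1,\ldots,a-1\}$, neither position carries $x$, which means both positions lie inside the same maximal $x$-free block. Consequently, the global forbidden pattern ``two equal adjacent letters from $\{0,1,\ldots,a-1\}$'' occurs if and only if some block contains such a pattern. Thus the words produced by Proposition~\ref{alf} are exactly the desired words of length $n-1$ over $\{0,1,\ldots,m+a-1\}$.

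I do not expect any real obstacle: once the block/global compatibility is checked, the proof is a clean application of Proposition~\ref{alf} and the summation identity~(\ref{suma1}), with Corollary~\ref{co1} guaranteeing that the resulting numerical values match the recurrence that defines $f_m$.
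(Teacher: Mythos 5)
Your proof is correct, but it is not the paper's argument. The paper proves this proposition directly: it checks the initial values $f_m(1)=1$, $f_m(2)=m+a$, and then shows that the desired word count satisfies the recurrence of Corollary~\ref{co1} by splitting the words according to their first two letters --- prepending to a word any letter different from its first letter accounts for $(m+a-1)f_m(n+1)$ words beginning with two distinct letters, while a word beginning with a doubled letter must start with one of the $m$ letters from $\{a,\ldots,m+a-1\}$, giving $mf_m(n)$ words. You instead induct on $m$, using Proposition~\ref{alf} together with (\ref{suma1}) to adjoin one unrestricted letter at a time, and you rightly isolate the only point needing care: since the new letter $m+a-1$ lies outside $\{0,\ldots,a-1\}$, a forbidden adjacent equal pair can never straddle an occurrence of it, so the blockwise restriction coincides with the global one. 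That compatibility check is exactly what makes the invert-transform machinery applicable, and you state it more explicitly than the paper does when it later invokes Proposition~\ref{alf} for $c_m(n,k)$. Two small remarks: Proposition~\ref{alf} as restated assumes $m>1$, so your first inductive step ($m=1$) should be justified directly from the definition (\ref{cmnk}) of $c_1(n,k)$ by the usual separator/composition argument (the paper itself glosses over this distinction); and your closing appeal to Corollary~\ref{co1} is unnecessary, since $f_m$ is defined as the $m$-th invert transform and (\ref{suma1}) already gives the identification --- the recurrence plays no role in your route, whereas it is the engine of the paper's. In exchange, the paper's argument is shorter and self-contained, while yours generalizes immediately to any restriction that does not involve the adjoined letters.
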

\begin{proof}
 We have $f_m(1)=1$, since only the empty word has length $0$. Also, $f_m(2)=m+a$
 since a word of length $1$ may consist of any letter of the alphabet.
Assume that $n>2$. Consider a word of length $n+1$. In front of such a word, we insert a letter different from the first letter of the word. In this way, we obtain all words of length $n+2$ beginning with two different letters. The remaining words must begin with two same letters. Since there are $mf_m(n)$ such words, the statement is true.
\end{proof}
\begin{remark} The continued fraction $[f_0(1);f_0(2),f_0(3),\ldots]$ equals $\sqrt 2$. Also,
the sequence $f_1(1),f_1(2),\ldots,f_1(n)$ is the numerator of the $n$th convergent  of $\sqrt 2$.
\end{remark}
Since $f_m(1)=1$, we may apply Proposition \ref{alf} to obtain
\begin{corollary} The number $c_m(n,k)$ is the number of words of length $n-1$ over
$\{0,1,\ldots,a-1,\ldots,m+a-1\}$ in which $k-1$ letters equal $m+a-1$, and no two letters from $\{0,1,\ldots,a-1\}$ are identical.
\end{corollary}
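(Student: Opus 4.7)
The plan is to derive this corollary as a direct application of Proposition \ref{alf}, using the previous proposition in this section as the combinatorial input for $f_{m-1}$.

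First I would verify the hypotheses of Proposition \ref{alf}. The recurrence in Corollary \ref{co1} gives $f_m(1)=1$ for every $m$, and in particular $f_0(1)=1$. So the opening hypothesis of the proposition is satisfied. Next, replacing $m$ by $m-1$ in the preceding proposition of this section tells us that $f_{m-1}(n)$ counts the words of length $n-1$ over the alphabet
\[
\alpha = \{0,1,\ldots,a-1,a,\ldots,(m-1)+a-1\}=\{0,1,\ldots,m+a-2\}
\]
in which no two adjacent letters from $\{0,1,\ldots,a-1\}$ are equal. This is precisely the "$f_{m-1}(n)$ words of length $n-1$ over a finite alphabet $\alpha$" hypothesis required by Proposition \ref{alf}.

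Then I would introduce the new letter $x := m+a-1$, which is not in $\alpha$, and invoke Proposition \ref{alf} to conclude that $c_m(n,k)$ is the number of words of length $n-1$ over $\alpha\cup\{x\}=\{0,1,\ldots,m+a-1\}$ in which $x$ occurs exactly $k-1$ times. Because the inserted occurrences of $x$ are a fresh letter disjoint from $\{0,1,\ldots,a-1\}$, they do not create any new forbidden pattern, and the underlying word (in $\alpha$) still satisfies the restriction on adjacent repetitions from $\{0,1,\ldots,a-1\}$. Translating $x=m+a-1$ back gives exactly the statement of the corollary.

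I do not anticipate a real obstacle: every ingredient is already available. The only point that requires a moment of care is the implicit $m>1$ hypothesis of Proposition \ref{alf}; for $m=1$ the claim reduces via~(\ref{cm1}) to $c_1(n,k)$, which counts compositions weighted by $f_0$ and trivially matches the statement since no $(m+a-1)$ letter occurs when $k=1$ and the alphabet coincides with $\{0,1,\ldots,a-1\}$, exactly the situation handled by the previous proposition. So the edge case $m=1$ is consistent and no separate combinatorial argument is needed beyond noting it.
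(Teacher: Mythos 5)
Your proposal is correct and is essentially the paper's own argument: the paper likewise just observes that $f_m(1)=1$ (hence $f_0(1)=1$) and applies Proposition \ref{alf}, taking as input the preceding proposition at level $m-1$ (words over $\{0,\ldots,m+a-2\}$ with no two equal adjacent letters from $\{0,\ldots,a-1\}$) and adjoining the new letter $x=m+a-1$. The only weak spot is your $m=1$ aside, where the remark that no letter $m+a-1$ occurs covers only $k=1$ rather than general $k$ (what is really needed there is the same separator bijection that proves Proposition \ref{alf}, with $f_0$ as input); the paper itself simply applies the proposition without treating $m=1$ separately, so this does not affect the main line of your argument.
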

We next derive an explicit formula for $c_1(n,k)$.
\begin{proposition} We have
\begin{equation}\label{c1nk}c_1(n,n)=1,c_1(n,k)=\sum_{i=0}^{k-1}{k\choose i}{n-k-1\choose k-i-1}a^{k-i}(a-1)^{n-2k+i},(k<n).
\end{equation}
\end{proposition}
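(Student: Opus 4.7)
The plan is to evaluate the defining sum for $c_1(n,k)$ directly, by partitioning the compositions of $n$ into $k$ parts according to how many parts equal $1$. Recall that
\[
c_1(n,k) = \sum_{i_1+\cdots+i_k=n} f_0(i_1)\cdots f_0(i_k),
\]
where $f_0(1)=1$ and $f_0(j)=a(a-1)^{j-2}$ for $j\geq 2$. So the weight of a composition depends only on the multiset of parts, and the parts equal to $1$ contribute trivially.

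First I would handle the boundary case $k=n$: each part must equal $1$, giving the single composition $(1,1,\ldots,1)$ of weight $1^n=1$.

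For $k<n$, let $i$ denote the number of parts equal to $1$. Since $k<n$, not all parts can be $1$, so $0\leq i\leq k-1$. I would proceed in three steps: (a) choose the positions among the $k$ slots occupied by the $1$'s, contributing a factor $\binom{k}{i}$; (b) count the number of compositions of $n-i$ into $k-i$ parts, each at least $2$: after the substitution $j_t\mapsto j_t-2$ this becomes the number of weak compositions of $n-i-2(k-i)=n-2k+i$ into $k-i$ nonnegative parts, namely $\binom{n-k-1}{k-i-1}$; (c) compute the product of weights: the $i$ parts equal to $1$ contribute $1$, while each of the $k-i$ parts $j\geq 2$ contributes $a(a-1)^{j-2}$, for a total weight
\[
a^{k-i}(a-1)^{\sum(j_t-2)} \;=\; a^{k-i}(a-1)^{n-i-2(k-i)} \;=\; a^{k-i}(a-1)^{n-2k+i},
\]
which depends only on $i$. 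Summing over $i$ from $0$ to $k-1$ yields the claimed formula.

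There is no real obstacle here; the argument is purely bookkeeping. The only point requiring some care is making sure the range of summation is correct (in particular ruling out $i=k$ when $k<n$) and that the stars-and-bars count for compositions into parts $\geq 2$ is applied with the right shift.
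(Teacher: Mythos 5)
Your proposal is correct and follows essentially the same route as the paper's proof: both classify compositions of $n$ into $k$ parts by the number $i$ of parts equal to $1$, choose their positions via $\binom{k}{i}$, count the remaining compositions into parts $\geq 2$ by stars and bars as $\binom{n-k-1}{k-i-1}$, and observe the weight $a^{k-i}(a-1)^{n-2k+i}$ depends only on $i$. Your write-up is in fact a bit more explicit about the parts-$\geq 2$ shift than the paper's.
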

\begin{proof}
From (\ref{cmnk}), we firstly obtain $c_1(n,n)=1$. Assume that $k<n$. Since at most $k-1$ of $i_t,(t=1,2,\ldots,k)$ may equal $1$, then
\begin{gather*}c_1(n,k)=\sum_{i=0}^{k-1}{k\choose i}\sum_{j_1+j_2+\cdots+j_{k-i}=n-i}f(j_1)f(j_2)\cdots f(j_{k-i})\\
=\sum_{i=0}^{k-1}{k\choose i}a^{k-i}(a-1)^{n-2j+i}\sum_{j_1+j_2+\cdots+j_{k-i}=n-i}1\\
=\sum_{i=0}^{k-1}a^{k-i}(a-1)^{n-2k+i}{k\choose i}{n-k-1\choose k-i-1}.
\end{gather*}
\end{proof}
\begin{remark} Note that, in (\ref{c1nk}), terms in which  $i<2k-n$ would equal zero.
\end{remark}
  To obtain an explicit formula for $c_m(n,k)$, we use (\ref{cm1}).
  We first extract the term for $i=n$ to obtain
  \begin{equation*}c_m(n,k)=m^{n-k}{n-1\choose k-1}+\sum_{i=k}^{n-1}{i-1\choose k-1}c_1(n,i).\end{equation*}
It follows that
  \begin{equation*}c_m(n,k)=m^{n-k}{n-1\choose k-1}+\sum_{i=k}^{n-1}\sum_{j=0}^{i-1}(m-1)^{i-k}a^{i-j}(a-1)^{n-2i+j}{i-1\choose k-1}{n-i-1\choose i-j-1}{i\choose j}.
  \end{equation*}
Using (\ref{suma1}), we obtain the following formula for $f_m(n)$:
\begin{gather*}
f_m(n)=m^{n-1}+\sum_{k=1}^{n-1}\sum_{i=k}^{n-1}\sum_{j=0}^{i-1}(m-1)^{i-k}a^{i-j}(a-1)^{n-2i+j}{i-1\choose k-1}{i\choose j}{n-i-1\choose i-j-1}.
\end{gather*}
\section{Case 2}
Let $a$ be a positive integer. Define $f_0$ as follows:
\begin{equation}\label{r1}f_0(1)=1,f_0(2)=0, f_0(n+2)=af_0(n),(n\geq
1).\end{equation}
 We firstly describe the restricted words counted by $f_0$.
\begin{proposition}\label{ll1} For $a>0$, the number $f_0(n)$ is the number of words of length $n-1$ over the alphabet $\{0,1,\ldots,a-1\}$ in which there are no runs of odd length.
\end{proposition}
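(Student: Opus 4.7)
The plan is to induct on $n$, showing that the number of such words satisfies the same recurrence \eqref{r1}.

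First I would check the base cases. For $n=1$, the only word of length $0$ is the empty word, which trivially has no odd runs, agreeing with $f_0(1)=1$. For $n=2$, a word of length $1$ consists of a single letter, which forces a run of length $1$ (odd); hence zero admissible words, agreeing with $f_0(2)=0$.

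For the inductive step with $n\geq 3$, I would exhibit a bijection between words of length $n-1$ over $\{0,1,\ldots,a-1\}$ with all runs of even length and pairs $(c,w')$ where $c\in\{0,1,\ldots,a-1\}$ and $w'$ is such a word of length $n-3$. The map sends $w=w_1w_2\cdots w_{n-1}$ to $(w_1,\,w_3w_4\cdots w_{n-1})$. The key observation (which makes this well-defined) is that in any nonempty word with only even-length runs, the first run has length at least $2$, so $w_1=w_2$; thus deleting the first two letters removes an entire pair from the initial run. The inverse map takes $(c,w')$ to $ccw'$. To see this gives a valid admissible word, split into two cases: if $w'$ is empty or begins with a letter different from $c$, then the initial run of $ccw'$ has length exactly $2$ and all subsequent runs coincide with the runs of $w'$, all even; if $w'$ begins with $c$, then its initial run (of even length) merges with the prepended $cc$ to give an initial run of length $2+(\text{even})$, still even, while the remaining runs are unchanged. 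In either case all runs remain even.

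Counting, there are $a$ choices for $c$ and, by the induction hypothesis, $f_0(n-2)$ choices for $w'$, so the number of admissible words of length $n-1$ equals $af_0(n-2)=f_0(n)$, matching \eqref{r1}. The only subtlety is the case analysis for the inverse map, where one must observe that prepending $cc$ either opens a new even run or lengthens the existing initial even run of $w'$ by $2$; this is the main (but minor) obstacle, and handling it carefully is what makes the bijection work.
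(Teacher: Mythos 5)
Your proof is correct and follows essentially the same approach as the paper: both verify the base cases and then observe that an admissible word must begin with two identical letters, so stripping (or prepending) such a pair yields the recurrence $af_0(n-2)=f_0(n)$ from \eqref{r1}. Your version merely spells out as an explicit bijection (with the case analysis on whether the prepended pair merges with the initial run) what the paper states in one sentence.
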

\begin{proof}
Let $d(n)$ denote the number of words of length $n$, which we wish to
count. Firstly, $d(0)=1$ since only the empty word has length $0$. Next, $d(1)=0$ as there are no runs of length $1$.
Assume that $n>2$. A word of length $n$ must begin with two identical letters. Hence, there are $ad(n-2)$ such words.
We conclude that  the following recurrence holds:
\[d(0)=1,d(1)=0, d(n)=ad(n-2),(n\geq 2),\]
which yields  $d(n-1)=f_0(n),(n\geq 1)$.
\end{proof}
From (\ref{r1}), we easily obtain the following explicit formula for $f_0$:
\begin{equation*}
f_0(n)=\begin{cases}0,&\text{ if $n=2t$};\\a^t,&\text{ if $n=2t+1$}.
\end{cases}
\end{equation*}
\begin{corollary}\label{co2}
For $m\geq 0$, the following recurrence holds:
\begin{gather*}f_m(1)=1,f_m(2)=m,\\f_m(n+2)=mf_m(n+1)+af_m(n),(n\geq 1).\end{gather*}
\end{corollary}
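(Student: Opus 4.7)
The plan is to derive the recurrence by computing the ordinary generating function of $f_m$ explicitly. First I would write down $F_0(x)=\sum_{n\geq 1}f_0(n)x^n$ from the closed form displayed just above the corollary: only the odd-indexed terms survive, so
\begin{equation*}
F_0(x)=\sum_{t\geq 0}a^t x^{2t+1}=\frac{x}{1-ax^2}.
\end{equation*}

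Next I would invoke the standard identity for the iterated invert transform, which says that if $f_m$ is the $m$-th invert transform of $f_0$, then the associated ordinary generating functions are related by $F_m(x)=F_0(x)/(1-mF_0(x))$. This is precisely the mechanism that produces Corollary~\ref{co1} from \cite[Corollary 9]{ja1}, so no new machinery is required here. Substituting the expression for $F_0$ and clearing the inner fraction gives the compact form
\begin{equation*}
F_m(x)=\frac{x/(1-ax^2)}{1-mx/(1-ax^2)}=\frac{x}{1-mx-ax^2}.
\end{equation*}

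From this rational generating function the conclusion is immediate. Cross-multiplying by the denominator yields $(1-mx-ax^2)\sum_{n\geq 1}f_m(n)x^n=x$; comparing the coefficients of $x^1$, $x^2$, and $x^{n+2}$ for $n\geq 1$ produces $f_m(1)=1$, $f_m(2)-mf_m(1)=0$, and $f_m(n+2)-mf_m(n+1)-af_m(n)=0$, which are exactly the three claims of the corollary.

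The only conceptual obstacle is the invert-transform identity $F_m(x)=F_0(x)/(1-mF_0(x))$ itself, but since this is the very tool used in the parallel Corollary~\ref{co1} and is standard in the author's prior work \cite{ja1}, the remaining argument reduces to a one-line algebraic simplification and a coefficient comparison.
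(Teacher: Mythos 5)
Your argument is correct: $F_0(x)=x/(1-ax^2)$ is right, the iterated invert-transform identity $F_m(x)=F_0(x)/(1-mF_0(x))$ is the standard fact (it follows by an easy induction from the single-step identity $F_1=F_0/(1-F_0)$, which is the definition of the invert transform used throughout the paper), and the coefficient comparison in $(1-mx-ax^2)F_m(x)=x$ does yield exactly the three claims. The paper itself proves the corollary differently in form, though not in substance: it simply cites \cite[Corollary 9]{ja1}, a general result of the author stating how the linear recurrence satisfied by $f_0$ transforms under the $m$-fold invert transform, and that general result is in turn driven by the same generating-function mechanism you use. So what you gain is a self-contained, verifiable two-line derivation specific to this $f_0$, whereas the paper gains brevity and uniformity, since the same citation disposes of Corollary~\ref{co1}, equation~(\ref{ab1}), and the analogous recurrences in Cases 4 and 5 without redoing the algebra each time. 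The only point you should not leave entirely implicit is the justification of $F_m=F_0/(1-mF_0)$; stating the one-line induction (or citing the prior work for it, as you do) closes that.
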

\begin{proof}
The proof is a consequence of~\cite[Corollary 9]{ja1}.
\end{proof}
\begin{proposition}\label{pp5}
The number $f_m(n)$ is the number of words of length $n-1$ over the alphabet
$\{0,1,\ldots,a-1,\ldots,a+m-1\}$, such that letters $0,1,\ldots,a-1$
avoid runs of odd length.
\end{proposition}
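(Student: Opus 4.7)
The plan is to verify the recurrence from Corollary \ref{co2} combinatorially. Let $g_m(n)$ denote the number of admissible words of length $n-1$; I will establish $g_m(1) = 1$, $g_m(2) = m$, and $g_m(n+2) = m\,g_m(n+1) + a\,g_m(n)$ for $n \geq 1$, from which the proposition follows by induction matched against Corollary \ref{co2}.

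The base cases are immediate: the empty word is admissible, giving $g_m(1) = 1$; and a word of length $1$ consisting of a letter from $\{0,1,\ldots,a-1\}$ would constitute a run of odd length $1$, so only the $m$ letters of $\{a,\ldots,a+m-1\}$ are allowed, giving $g_m(2) = m$.

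For the recurrence, I will condition on the first letter $w_1$ of an admissible word $w=w_1 w_2 \cdots w_{n+1}$. When $w_1 \in \{a,\ldots,a+m-1\}$, there are $m$ choices for $w_1$ and the suffix $w_2\cdots w_{n+1}$ is an arbitrary admissible word of length $n$, contributing $m\,g_m(n+1)$. When $w_1 \in \{0,\ldots,a-1\}$, admissibility forces $w_2 = w_1$, since otherwise $w_1$ alone would be a run of odd length $1$.

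The main obstacle is handling this second case: I need to show that stripping the first two letters is a bijection between admissible words of length $n+1$ beginning with a letter from $\{0,\ldots,a-1\}$ and pairs $(x, w')$ with $x \in \{0,\ldots,a-1\}$ and $w'$ an admissible word of length $n-1$, which would contribute $a\,g_m(n)$. The crucial observation is that prepending $xx$ to $w'$ lengthens any initial run of $x$ in $w'$ by exactly $2$, preserving its parity; and if $w'$ does not begin with $x$, or is empty, then $xx$ forms a fresh run of $x$ of even length $2$. All other runs are unaffected. Thus the admissibility of $xxw'$ is equivalent to that of $w'$, which establishes the bijection and completes the recurrence.
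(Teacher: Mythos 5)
Your proof is correct and takes essentially the same approach as the paper: you verify combinatorially that the word counts satisfy the recurrence of Corollary \ref{co2} by conditioning on the first letter, with the case of a leading letter from $\{0,\ldots,a-1\}$ forcing a doubled letter and contributing $a\,g_m(n)$. The only difference is that you spell out the parity-preserving bijection (stripping or prepending $xx$) that the paper's argument leaves implicit.
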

\begin{proof}
We let $d(n)$ denote the number of desired words of length $n-1$. It is clear that $d(0)=1$ and $d(1)=m$.
 A word of length $n+1$ may begin with a letter from $\{a,a+1,\ldots,a+m-1\}$. There are $md(n)$ such word.
 If a word begins with a letter from $\{0,1,\ldots,a-1\}$,  it must be followed by the same letter.
  Hence, there are $ad(n-1)$ such words.
We conclude that $d(n)=f_m(n+1)$.
\end{proof}

  Some well-known classes of numbers satisfy the  recurrence from Corollary \ref{co2}. We give the appropriate combinatorial meaning for some of them.
\begin{enumerate}
\item The case  $a=1,m=1$ concerns the Fibonacci numbers.
The number of binary words of length $n-1$ in which $0$ avoids a run of odd length is
$F_n$.
\item The case  $a=1,m=2$ concerns the Pell numbers $P_n$. The number of ternary words of length $n-1$
 in which $0$ avoids runs of odd length is $P_n$.
  \item The case  $a=2,m=1$ concerns the Jacobhstal  numbers $J_n$.  The number of ternary words of
  length $n-1$ in which $0$ and $1$ avoid runs of odd length is $J_n$.
\end{enumerate}

From the combinatorial interpretation, we easily derive an explicit formula  for $f_m(n)$.
\begin{proposition}\label{lrj} We have
\[f_m(n)=\sum_{j=0}^{\lfloor\frac {n-1}{2}\rfloor}m^{n-2j-1}a^{j}{n-1-j\choose j}.\]
\end{proposition}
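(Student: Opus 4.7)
The plan is to prove the formula combinatorially, counting in a different way the words enumerated by $f_m(n)$ according to Proposition \ref{pp5}. Call the letters from $\{0,1,\ldots,a-1\}$ \emph{small} and those from $\{a,a+1,\ldots,a+m-1\}$ \emph{large}. Since small letters are constrained to appear only in runs of even length, the key idea is to decompose each valid word uniquely into \emph{atoms} of two kinds: a singleton atom consisting of one large letter, or a pair atom $(\ell,\ell)$ consisting of two copies of some small letter $\ell$.

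First I would verify that atom-sequences are in bijection with the valid words of length $n-1$. In one direction, a valid word is scanned left to right: every large letter becomes a singleton atom, and every maximal run of small letters, which has even length $2t$ by hypothesis, is split into $t$ consecutive pair atoms all carrying that run's letter. In the other direction, any concatenation of atoms produces a word in which every maximal small-letter run has even length, because each pair contributes exactly $2$ to such a run and adjacent pair atoms (whether using the same small letter or different ones) only produce even runs. Next I would count. If an atom-sequence uses $j$ pair atoms, it uses $n-1-2j$ singleton atoms, for a total of $n-1-j$ atoms; the number of arrangements is ${n-1-j\choose j}$, the number of letter choices for the pair atoms is $a^j$, and for the singleton atoms is $m^{n-1-2j}$. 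Summing over $j$ from $0$ to $\lfloor(n-1)/2\rfloor$ gives the stated formula.

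The main subtlety is justifying uniqueness of the atom decomposition when a maximal small-letter run has length $2t$ with $t\geq 2$: the run is forced to split into exactly $t$ consecutive pair atoms each carrying the run's letter, because atoms have length at most $2$ and any pair atom inside the run must consist of two copies of the unique letter appearing there. Everything else is routine bookkeeping. As a sanity check one could alternatively verify the formula by induction using the recurrence $f_m(n+2)=mf_m(n+1)+af_m(n)$ of Corollary \ref{co2} together with the Pascal-type identity ${n-j\choose j}={n-1-j\choose j}+{n-1-j\choose j-1}$, but the combinatorial argument above is cleaner given the interpretation already established.
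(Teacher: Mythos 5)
Your proof is correct and takes essentially the same route as the paper: the paper's own argument also counts the words of Proposition \ref{pp5} by placing $j$ small-letter pairs among $n-1-j$ blocks in ${n-1-j\choose j}$ ways, with $a^j$ choices for the pairs and $m^{n-1-2j}$ choices for the remaining large letters. Your explicit bijection with atom-sequences and the uniqueness argument for splitting even runs merely make precise what the paper states informally.
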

\begin{proof}According to Proposition \ref{pp5}, in a word counted by $f_m$, the letters from $\{0,1,\ldots,a-1\}$ may appear only in pairs. There are $a$ such pairs.
 We may choose $j,(0\leq j\leq \lfloor\frac{n-1}{2}\rfloor)$  pairs in a word of length $n-1$. These $j$ pairs may be chosen in ${n-j-1\choose j}$ ways. When we have chosen $j$ pairs from $\{0,1,\ldots,a-1\}$, the remaining $n-1-2j$ letters are from $\{a,a+1,\ldots,a+m-1\}$,  which are $m$ in number.
\end{proof}
As a consequence, we obtain the following similar explicit formulas for the Fibonacci, Pell and Jacobsthal numbers:
\begin{gather*}F_n=\sum_{j=0}^{\left\lfloor\frac{n-1}{2}\right\rfloor}{n-j-1\choose j}, P_n=\sum_{j=0}^{\left\lfloor\frac{n-1}{2}\right\rfloor}2^{n-2j-1}{n-j-1\choose j},\\J_n=\sum_{j=0}^{\left\lfloor\frac{n-1}{2}\right\rfloor}2^{j}{n-j-1\choose j}
.\end{gather*}
\begin{corollary}
The number $c_m(n,k)$ is the number of words of length $n-1$ over the alphabet $\{0,1,\ldots,a-1,\ldots,a+m-1\}$ in which the letter $a+m-1$  appears $k-1$ times, and letters from $\{0,1,\ldots,a-1\}$ avoid runs of odd length.
\end{corollary}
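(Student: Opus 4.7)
The proof plan is a direct application of Proposition \ref{alf}, once its hypotheses are verified and its conclusion is reinterpreted in the language of the restricted words at hand.

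First I would check that the precondition $f_m(1)=1$ holds, which is immediate from Corollary \ref{co2}. Next I would invoke Proposition \ref{pp5} with $m$ replaced by $m-1$ to identify $f_{m-1}(n)$ as the number of words of length $n-1$ over the $(a+m-1)$-letter alphabet $\alpha=\{0,1,\ldots,a-1,\ldots,a+m-2\}$ in which letters from $\{0,1,\ldots,a-1\}$ avoid runs of odd length. Then I would apply Proposition \ref{alf} with this $\alpha$ and with the new letter $x=a+m-1$, which yields that $c_m(n,k)$ counts words of length $n-1$ over $\alpha\cup\{x\}=\{0,1,\ldots,a+m-1\}$ in which $x$ appears exactly $k-1$ times, decomposed as $w_1\,x\,w_2\,x\cdots x\,w_k$ with each block $w_j$ a valid $\alpha$-word.

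The only substantive step — and it is essentially a remark rather than an obstacle — is to verify that the block-by-block condition (each $w_j$ has its $\{0,\ldots,a-1\}$-runs of even length) is equivalent to the global condition (the concatenated word over $\alpha\cup\{x\}$ has all its $\{0,\ldots,a-1\}$-runs of even length). This equivalence holds because $x=a+m-1\notin\{0,\ldots,a-1\}$, so any occurrence of $x$ breaks a run; hence every maximal run of a letter in $\{0,\ldots,a-1\}$ in the global word lies entirely inside some block $w_j$, and conversely each $w_j$ inherits exactly the runs from the global word that it contains. Once this observation is made, the identification in Proposition \ref{alf} transports the run-restriction from each block to the whole word, and the corollary follows with no further computation.
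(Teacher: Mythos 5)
Your proposal is correct and follows the same route as the paper, whose proof is simply the citation of Proposition \ref{alf}; you merely spell out the details (identifying $f_{m-1}$ via Proposition \ref{pp5}, and checking that the letter $x=a+m-1$ breaks runs so the block-wise and global run conditions agree), which is a worthwhile elaboration. Note only that the hypothesis of Proposition \ref{alf} is $f_0(1)=1$ (immediate from the definition of $f_0$ in this case) rather than $f_m(1)=1$, a harmless slip.
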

\begin{proof} The proof follows from Proposition \ref{alf}.
\end{proof}
We now derive an explicit formula  for $c_1(n,k)$.
\begin{proposition}\label{plr} The following equation holds:
\begin{equation*}c_1(n,k)=\begin{cases}a^{\frac{n-k}{2}}{\frac{n+k}{2}-1\choose k-1},&\text{ if $n-k$ is even};\\0,&\text{ if $n-k$ is odd }.\end{cases}
\end{equation*}
\end{proposition}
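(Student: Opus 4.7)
The plan is to substitute the explicit formula for $f_0$ (namely $f_0(2t)=0$ and $f_0(2t+1)=a^{t}$) directly into the definition (\ref{cmnk}) of $c_1(n,k)$. Because $f_0$ vanishes at even arguments, only compositions $i_1+i_2+\cdots+i_k=n$ whose parts are all odd can contribute. The sum of $k$ odd numbers has the same parity as $k$, so if $n-k$ is odd no term survives, immediately giving the second case of the formula.

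For the case $n-k$ even, I would change variables by writing $i_j=2s_j+1$ with $s_j\ge 0$. The constraint $\sum i_j=n$ becomes $\sum s_j=(n-k)/2$, and each factor becomes $f_0(i_j)=a^{s_j}$, so the product collapses to the constant $a^{(n-k)/2}$, independent of the particular composition. A stars-and-bars count then shows that the number of weak compositions of $(n-k)/2$ into $k$ nonnegative parts equals $\binom{(n-k)/2+k-1}{k-1}=\binom{(n+k)/2-1}{k-1}$, and multiplying by the common factor $a^{(n-k)/2}$ yields the claimed formula.

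There is essentially no obstacle here: once the parity constraint from $f_0$ is recognised, the rest is a routine stars-and-bars computation. A slightly more conceptual variant would invoke Proposition \ref{alf}: $c_1(n,k)$ counts length-$(n-1)$ words over $\{0,\ldots,a-1,x\}$ with $x$ appearing $k-1$ times and in which letters from $\{0,\ldots,a-1\}$ avoid runs of odd length. The $k-1$ copies of $x$ split such a word into $k$ blocks of lengths $\ell_1,\ldots,\ell_k$ with $\sum \ell_j=n-k$; by Proposition \ref{ll1} each block contributes $f_0(\ell_j+1)$ choices, which again forces every $\ell_j$ to be even (else the count is $0$) and recovers the same binomial coefficient.
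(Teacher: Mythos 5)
Your proposal is correct and follows essentially the same route as the paper: substitute the explicit values of $f_0$ into (\ref{cmnk}), observe that only all-odd compositions survive (giving $0$ when $n-k$ is odd), and count the remaining compositions by stars and bars after the substitution $i_j=2s_j+1$. Your write-up is in fact slightly more careful than the paper's, since you state the parity argument for the vanishing case explicitly.
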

\begin{proof}
Each term in (\ref{cmnk}) in which $i_t$ is even equals zero. Hence, (\ref{cmnk}) becomes
\begin{gather*}c_1(n,k)=\sum_{2j_1+1+2j_2+1+\cdots+2j_k+1=n}a^{j_1}\cdot a^{j_2}\cdots a^{j_k}\\=a^{\frac{n-k}{2}}\sum_{j_1+j_2+\cdots+j_k=\frac{n+k}{2}}1=a^{\frac{n-k}{2}}{\frac{n+k}{2}-1\choose k-1}.
\end{gather*}
\end{proof}
As a consequence of (\ref{suma1}), we obtain the following explicit formulas for the Fibonacci and Jacobsthal numbers:
\begin{gather*}F_{2n}=\sum_{k=1}^n{n+k-1\choose n-k},F_{2n-1}=\sum_{k=1}^n{n+k-2\choose n-k},\\
J_{2n}=\sum_{k=1}^n2^{n-k}{n+k-1\choose n-k},J_{2n-1}=\sum_{k=1}^n2^{n-k}{n+k-2\choose n-k}.
\end{gather*}
Furthermore, we derive an explicit formula for $c_2(n,k)$.
Using (\ref{cm1}), for even $n$, we obtain
\begin{gather*}c_2(2n,k)=\sum_{i=k}^{2n}{i-1\choose k-1}c_1(2n,i)=
\sum_{j=\lceil\frac k2\rceil}^{n}{2j-1\choose k-1}c_1(2n,2j)\\
=\sum_{j=\lceil\frac k2\rceil}^{n}a^{n-j}{2j-1\choose k-1}{n+j-1\choose n-j}.
\end{gather*}
For odd $n$, we have
\begin{gather*}c_2(2n-1,k)=\sum_{i=k}^{2n-1}{i-1\choose k-1}c_1(2n,i)=
\sum_{j=\lceil\frac {k+1}{2}\rceil}^{n}{2j-2\choose k-1}c_1(2n-1,2j-1)\\
=\sum_{j=\lceil\frac {k+1}{2}\rceil}^{n}a^{n-j}{2j-2\choose k-1}{n+j-2\choose n-j}.
\end{gather*}
In particular, for $a=1$, we obtain the following formulas for Pell numbers:
\begin{gather*}P_{2n}=\sum_{k=1}^{2n}\sum_{j=\lceil\frac k2\rceil}^{n}{2j-1\choose k-1}{n+j-1\choose n-j},\\
P_{2n-1}=\sum_{k=1}^{2n-1}\sum_{j=\lceil\frac{k+1}{2}\rceil}^{n}{2j-2\choose k-1}{n+j-2\choose n-j}.
\end{gather*}
\begin{remark} Using (\ref{cm1}), we may obtain an explicit formula for $c_m(n,k)$.
\end{remark}
\section{Case 3}
 Let $a>b>0$ be integers. We define $f_0$ by the following recurrence:
\begin{equation}\label{re2}f_0(1)=1,f_0(2)=a, f_0(n+2)=af_0(n+1)-bf_0(n),(n\geq 1).\end{equation}
\begin{proposition}
 The number  $f_0(n)$
is the number of words of length $n-1$ over the alphabet $\{0,1,\ldots,a-1\}$, avoiding subwords $0i,(i=1,\ldots,b)$.
\end{proposition}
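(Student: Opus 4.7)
The plan is to let $d(n)$ denote the number of words of length $n$ over $\{0,1,\ldots,a-1\}$ that avoid every subword $0i$ with $i\in\{1,\ldots,b\}$, and show that the sequence $d(0),d(1),d(2),\ldots$ satisfies the same recurrence and initial conditions as $f_0(1),f_0(2),f_0(3),\ldots$. That gives $f_0(n)=d(n-1)$, which is the claim.

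The base cases are immediate: $d(0)=1$ because only the empty word has length $0$, and $d(1)=a$ because a single letter can be anything. Matching these with $f_0(1)=1$ and $f_0(2)=a$ takes care of the initial conditions. The substantive step is to establish the linear recurrence
\[
d(n)=a\,d(n-1)-b\,d(n-2),\qquad (n\geq 2).
\]
To do this, I would take any valid word of length $n-1$ and append an arbitrary letter at the end, producing $a\,d(n-1)$ strings; the only way such a string fails to be valid is if its last two letters form a forbidden pattern $0i$ with $1\leq i\leq b$, which means the word of length $n-1$ ended in $0$ and we appended one of the $b$ forbidden letters. Thus if $d_0(n-1)$ counts valid words of length $n-1$ ending in $0$, then $d(n)=a\,d(n-1)-b\,d_0(n-1)$.

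The only point requiring care is identifying $d_0(n-1)$ with $d(n-2)$. This I would prove by the obvious bijection: deleting the terminal $0$ from any valid word of length $n-1$ ending in $0$ yields a valid word of length $n-2$ (no forbidden subword is created by deletion), and conversely appending $0$ to any valid word of length $n-2$ cannot create a forbidden subword, because the forbidden patterns are of the form $0i$ with $i\neq 0$, so appending a $0$ is always safe. Hence $d_0(n-1)=d(n-2)$, giving the desired recurrence. Since $d$ and $f_0$ satisfy the same second-order linear recurrence with matching initial values, induction yields $f_0(n)=d(n-1)$ for all $n\geq 1$, completing the proof. The most delicate point is the asymmetry between prepending and appending: appending $0$ is always legal, while appending letters $1,\ldots,b$ is conditional on what came before, which is precisely what drives the $-b\,d(n-2)$ correction term.
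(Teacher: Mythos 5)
Your proof is correct and takes essentially the same route as the paper: both arguments verify that the count $d(n)$ of valid words of length $n$ satisfies $d(0)=1$, $d(1)=a$, $d(n)=a\,d(n-1)-b\,d(n-2)$, by extending a valid word by one letter and subtracting the extensions that create a forbidden pair $0i$. The only (immaterial) difference is that the paper prepends the new letter and subtracts words beginning with a forbidden subword, while you append it and subtract words ending in one; your identification $d_0(n-1)=d(n-2)$ in fact spells out the step the paper leaves implicit.
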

\begin{proof} We let $d(n)$ denote the number of the words of length $n-1$. Firstly, $d(0)=1$, since only the empty word has length $0$. Next, $d(1)=a$, since there are no restrictions on words of length $1$.
Assume that $n>1$. A word of length $n$ may begin with any letter. We have $a\cdot d(n-1)$ such words. From this number, we must subtract words which begin with subwords $0i,(i=1,2,\ldots,b)$.
Hence, $d(n)$ satisfies the same recurrence as $f_0(n)$, and the proposition is proved.
\end{proof}
\begin{example}
\begin{enumerate}
\item
If $a=2,b=1$, we have
\[f_0(1)=1,f_0(2)=2,f_0(n+2)=2f_0(n+1)-f_0(n),(n\geq 1),\]
which yields that $f_0(n)=n$. Hence, $n$ is the number of binary words of length $n-1$  avoiding subword $01$.
\item If $a=3,b=1$, we have
\[f_0(1)=1,f_0(2)=3,f_0(n+2)=3f_0(n+1)-f_0(n),(n\geq 1),\]
which is a well-known recurrence for the Fibonacci numbers $F_{2n}$. Hence,
$F_{2n}$ is the number of ternary words of length $n-1$ avoiding subword $01$.
\end{enumerate}
\end{example}
We now consider the particular case $a=b+1$.
\begin{corollary}
If $b>1$ and $a=b+1$, then
 \begin{equation*}f_0(n)=\frac{b^n-1}{b-1}.\end{equation*}
\end{corollary}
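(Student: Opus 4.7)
The plan is to solve the recurrence explicitly, since in the specialization $a=b+1$ it has a particularly clean characteristic polynomial. Substituting $a=b+1$ into (\ref{re2}) gives
\begin{equation*}
f_0(n+2) = (b+1)f_0(n+1) - b f_0(n),
\end{equation*}
whose characteristic equation $x^2 - (b+1)x + b = 0$ factors as $(x-1)(x-b) = 0$. Hence the general solution has the form $f_0(n) = A + B \cdot b^n$ for constants $A,B$.

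Next I would pin down $A$ and $B$ from the initial values $f_0(1) = 1$ and $f_0(2) = a = b+1$. The linear system $A + Bb = 1$, $A + Bb^2 = b+1$ is easy to solve: subtracting yields $Bb(b-1) = b$, so (using $b>1$) $B = 1/(b-1)$ and then $A = -1/(b-1)$. Substituting back produces
\begin{equation*}
f_0(n) = \frac{b^n - 1}{b-1},
\end{equation*}
which is exactly the claimed formula.

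As an alternative, one could simply verify the closed form by induction: the base cases $n=1,2$ match, and a direct computation shows
\begin{equation*}
(b+1)\frac{b^{n+1}-1}{b-1} - b\,\frac{b^n-1}{b-1} = \frac{b^{n+2}-1}{b-1},
\end{equation*}
confirming that $(b^n-1)/(b-1)$ satisfies the same recurrence. There is no real obstacle here: the only thing to watch for is the hypothesis $b>1$, which is needed so that $b-1 \neq 0$ and the denominator in the closed form makes sense (the case $b=1$ degenerates, as the characteristic polynomial then has a double root at $1$, giving a linear rather than exponential solution).
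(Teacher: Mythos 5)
Your proposal is correct, and your ``alternative'' verification is in fact exactly the paper's proof: the paper simply checks the initial values and computes $(b+1)\frac{b^{n+1}-1}{b-1}-b\,\frac{b^n-1}{b-1}=\frac{b^{n+2}-1}{b-1}$, confirming the closed form satisfies the recurrence \eqref{re2} with $a=b+1$. Your primary route is genuinely different in flavor: you solve the recurrence from scratch via the characteristic polynomial $x^2-(b+1)x+b=(x-1)(x-b)$, write $f_0(n)=A+Bb^n$, and determine $A=-1/(b-1)$, $B=1/(b-1)$ from $f_0(1)=1$, $f_0(2)=b+1$; this derivation explains where the formula comes from and makes transparent why $b>1$ matters (distinct roots, nonzero denominator), whereas the paper's verification is shorter but presupposes the answer. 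Both arguments are complete; your remark about the degenerate case $b=1$ (double root, linear growth) is consistent with the paper's earlier example $a=2$, $b=1$ where $f_0(n)=n$.
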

\begin{proof}
We have $f_0(1)=1,f_0(2)=1+b=a$. Further,
\begin{equation*}f_0(n+2)=\frac{b^{n+2}-1}{b-1}.\end{equation*}
On the other hand, we have
\begin{equation*}
(b+1)f_0(n+1)-bf_0(n)=(b+1)\cdot\frac{b^{n+1}-1}{b-1}-b\cdot\frac{b^n-1}{b-1}=\frac{b^{n+2}-1}{b-1}.
\end{equation*}
\end{proof}
In particular, for $a=3,b=2$, we have $f_0(n)=2^n-1$, which yields
\begin{corollary} The Mersenne number $2^n-1$ is the number of ternary words of length $n-1$ avoiding $01$ and $02$.
\end{corollary}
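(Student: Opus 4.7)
The plan is to read this off as a direct specialization of the two previous results in this section, so there is essentially no new combinatorial content to establish; the work is just bookkeeping the parameters. First I would set $a=3$ and $b=2$ in the preceding corollary, which gives
\[
f_0(n) \;=\; \frac{b^n-1}{b-1} \;=\; \frac{2^n-1}{2-1} \;=\; 2^n-1,
\]
i.e.\ the $n$th Mersenne number. Next I would specialize the proposition that identified $f_0(n)$ combinatorially: with $a=3$ and $b=2$, the alphabet $\{0,1,\ldots,a-1\}$ becomes the ternary alphabet $\{0,1,2\}$, and the forbidden subwords $0i$ for $i=1,\ldots,b$ become exactly $01$ and $02$. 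Thus $f_0(n)$ counts ternary words of length $n-1$ that avoid both $01$ and $02$.

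Combining the two specializations yields the equality of $2^n-1$ with this count, which is the claim. The only things to double-check are that the hypothesis $b>1$ needed for the explicit formula is satisfied (it is, since $b=2$), and that the initial values $f_0(1)=1$, $f_0(2)=3$ are consistent with both sides: on the combinatorial side, the empty word is the unique word of length $0$, and all three ternary letters are legal length-$1$ words. Since there is no obstacle beyond this substitution, I would present the proof as a one-line deduction from the preceding corollary and proposition rather than repeat any recurrence argument.
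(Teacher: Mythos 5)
Your proposal is correct and matches the paper's own argument: the corollary is stated there as an immediate specialization ($a=3$, $b=2$) of the formula $f_0(n)=\frac{b^n-1}{b-1}$ for the case $a=b+1$ together with the earlier proposition identifying $f_0(n)$ as the count of words over $\{0,\ldots,a-1\}$ avoiding $0i$, $i=1,\ldots,b$. Your extra check of the hypotheses ($b>1$, initial values) is sound but adds nothing beyond the paper's one-line deduction.
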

Using~\cite[Corollary 9]{ja1}, we obtain
\begin{equation}
\label{ab1}f_m(1)=1,f_m(2)=m+a;f_m(n+2)=(a+m)f_m(n+1)-bf_m(n),(n\geq 1).\end{equation}
This means that $f_m$ counts the same sort of words as $f_0$, with $m+a$ instead of $a$.

Using Proposition \ref{alf}, we obtain the following combinatorial interpretation of $c_m(n,k)$.
\begin{corollary} The number $c_m(n,k)$ is the  number of words of length $n-1$ over the alphabet $\{0,1,\ldots,b-1,b\ldots,m+a-1\}$ having exactly $k-1$  letters equal $m+a-1$ and avoiding subwords $0j,(j=1,2,\ldots,b)$.
\end{corollary}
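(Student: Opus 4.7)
The plan is to invoke Proposition \ref{alf} with $\alpha = \{0, 1, \ldots, m+a-2\}$ and the new letter $x$ identified with $m+a-1$. Two things have to be checked for this to yield the claimed interpretation.

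First, I need a combinatorial description of $f_{m-1}(n)$ on $\alpha$. The recurrence (\ref{ab1}) shows that, for every $m \geq 1$, the function $f_{m-1}$ satisfies the same two-term recurrence as $f_0$ with the parameter $a$ replaced by $a+m-1$. Hence, by the same argument used in the first proposition of this section, $f_{m-1}(n)$ counts the words of length $n-1$ over $\{0,1,\ldots,a+m-2\} = \alpha$ that avoid the subwords $0j$ for $j = 1,\ldots,b$. Note that since $b < a \leq a+m-1$, the letters involved in the forbidden subwords all lie inside $\alpha$, so the interpretation is consistent.

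Second, I need to verify that imposing the avoidance of $0j$ globally on a word of length $n-1$ over $\alpha \cup \{x\}$ is equivalent to imposing it on each maximal $\alpha$-segment delimited by occurrences of $x$. This rests on the crucial observation that $x = m+a-1 \notin \{0,1,\ldots,b\}$: no forbidden subword $0j$ can contain $x$, so every occurrence of $0j$ in the full word lies entirely inside one $\alpha$-segment, and conversely. Granted this equivalence, the decomposition of a word into $\alpha$-segments separated by the $k-1$ copies of $x$ is exactly the bijection underlying Proposition \ref{alf}, and the corollary follows immediately.

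The only nontrivial point is the second one: confirming that the new letter is incapable of participating in a forbidden subword so that the local and global avoidance conditions coincide. Once this is in place, a direct application of Proposition \ref{alf} finishes the argument; the first step (reading off the meaning of $f_{m-1}$ from the recurrence) is a straightforward induction on $m$ that does not need to be repeated explicitly.
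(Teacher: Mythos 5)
Your proposal is correct and follows essentially the same route as the paper: the paper notes that the recurrence \eqref{ab1} means $f_{m-1}$ counts the same kind of words as $f_0$ with $a+m-1$ in place of $a$, and then applies Proposition \ref{alf} with $x=m+a-1$. Your additional verification that the forbidden factors $0j$, $j\le b<m+a-1$, cannot involve the new letter (so segment-wise and global avoidance coincide) is a point the paper leaves implicit, but it is the same argument.
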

We next derive an explicit formula for $c_1(n,k)$. A generating function for the sequence
$f_0(1),f_0(2),\ldots$ is $\frac{1}{bx^2-ax+1}$. According to~\cite[Equation (1)]{ja3}, we have \[\frac{x^k}{(bx^2-ax+1)^k}=\sum_{n=k}^\infty c_1(n,k)x^k.\]
 The numbers $\alpha=\frac{a+\sqrt{a^2-4b}}{2b}$ and $\beta=\frac{a-\sqrt{a^2-4b}}{2b}$ are the solutions of the equation $bx^2-ax+1=0$.
\begin{proposition}\label{p10} We have
\[c_1(n,k)=\frac{1}{b^k}\sum_{j=0}^{n-k}\frac{1}{\alpha^{j+k}\beta^{n-j}}{n-j-1\choose k-1}{k+j-1\choose k-1}.\]
\end{proposition}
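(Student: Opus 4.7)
The plan is to start from the generating function identity that is already stated just above Proposition 10, namely
\[
\frac{x^k}{(1-ax+bx^2)^k}=\sum_{n\ge k}c_1(n,k)\,x^n,
\]
and then extract the coefficient of $x^n$ by a partial-fraction style expansion. Since $\alpha$ and $\beta$ are the roots of $bx^2-ax+1=0$, I first rewrite
\[
1-ax+bx^2=b(x-\alpha)(x-\beta),
\]
so that
\[
\frac{x^k}{(1-ax+bx^2)^k}=\frac{x^k}{b^k(x-\alpha)^k(x-\beta)^k}.
\]

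Next I expand each factor as a power series around $0$ using the generalized binomial theorem: write $(x-\alpha)^{-k}=(-\alpha)^{-k}(1-x/\alpha)^{-k}$ and similarly for $\beta$, giving
\[
(x-\alpha)^{-k}=(-1)^k\alpha^{-k}\sum_{i\ge 0}\binom{k+i-1}{k-1}\alpha^{-i}x^{i},
\]
and an analogous expression for $(x-\beta)^{-k}$. Multiplying the two series produces a $(-1)^{2k}=1$, so the signs disappear, and the Cauchy product gives
\[
\frac{1}{(x-\alpha)^k(x-\beta)^k}=\frac{1}{\alpha^k\beta^k}\sum_{N\ge 0}\Biggl(\sum_{j=0}^{N}\binom{k+(N-j)-1}{k-1}\binom{k+j-1}{k-1}\alpha^{-(N-j)}\beta^{-j}\Biggr)x^N.
\]

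Now I multiply by $x^k/b^k$ and read off the coefficient of $x^n$, which equals the coefficient of $x^{N}$ with $N=n-k$. Using the identity $\binom{k+(n-k-j)-1}{k-1}=\binom{n-j-1}{k-1}$, this yields
\[
c_1(n,k)=\frac{1}{b^k}\sum_{j=0}^{n-k}\frac{1}{\alpha^{k}\beta^{k}}\binom{n-j-1}{k-1}\binom{k+j-1}{k-1}\alpha^{-(n-k-j)}\beta^{-j},
\]
which after cleaning up the powers of $\alpha$ and $\beta$ gives precisely the stated formula (using the symmetry between $\alpha$ and $\beta$, equivalently the reindexing $j\mapsto n-k-j$, to match the exponents $\alpha^{-(j+k)}\beta^{-(n-j)}$ written in the proposition).

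There is no real obstacle: the entire argument is a generating-function/binomial-series computation. The only place that requires a little care is getting the signs right in $(-\alpha)^{-k}(-\beta)^{-k}=\alpha^{-k}\beta^{-k}$ and ensuring that the inner summation index in the Cauchy product lines up with the index $j$ used in the statement of Proposition \ref{p10}. Once those bookkeeping points are settled, the identity drops out immediately.
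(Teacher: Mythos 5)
Your proposal is correct and follows essentially the same route as the paper: factor the denominator using the roots $\alpha,\beta$, expand each factor by the negative binomial series, and take the Cauchy product to read off the coefficient of $x^n$. The only cosmetic differences are that you write $(x-\alpha)^k(x-\beta)^k$ instead of $(\alpha-x)^k(\beta-x)^k$ (the signs cancel, as you note) and that you need the final reindexing $j\mapsto n-k-j$ to match the exponents as printed, which you handle correctly.
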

\begin{proof}
We expand $\frac{x^k}{b^k(\alpha-x)^k(\beta-x)^k}$ into powers of $x$.
Since \[\frac{1}{(\gamma -x)^k}=\sum_{i=0}^\infty{k+i-1\choose k-1}\frac{x^i}{\gamma^{i+k}},\] we easily obtain
\[\frac{x^k}{b^k(\alpha-x)^k(\beta-x)^k}=\sum_{i=0}^\infty
\left[\sum_{j=0}^i\frac{1}{b^k\alpha^{j+k}
\beta^{i-j+k}}
{k+j-1\choose k-1}{k+i-j-1\choose k-1}\right]x^{i+k},\]
 and the statement follows by replacing $i$ by $n-k$.
 \end{proof}

In the case $a=b+1$, we have $\alpha=1$ and $\beta=\frac 1b$.
Therefore, the following formula holds:
\begin{equation}\label{amb}c_1(n,k)=\sum_{i=0}^{n-k}
b^{n-k-i}{n-i-1\choose k-1}{k+i-1\choose k-1}.\end{equation}
Using (\ref{cmnk}), we obtain
\begin{identity}
\begin{equation*}
\sum_{i_1+i_2+\cdots+i_k=n}\left[\prod_{t=1}^k(b^{i_t}-1)\right]=\sum_{i=0}^{n-k}
b^{n-k-i}(b-1)^k{n-i-1\choose k-1}{k+i-1\choose k-1},
\end{equation*}
where $i_t>0,(t=1,2,\ldots,k)$.
\end{identity}
\begin{remark} Using (\ref{cm1}) and  (\ref{suma1}), we  obtain explicit formulas  for $c_m(n,k)$ and $f_m(n)$.
\end{remark}
\section{Case 4}
 We let $\mathcal R$ denote the condition given in this case.
We first solve the problem for binary words.
\begin{proposition}Let $f_0(n)$ be the number of binary words satisfying $\mathcal R$. Then,
\begin{enumerate}
\item $f_0(1)=1,f_0(2)=0,f_0(n+2)=f_0(n+1)+f_0(n),(n>1).$
\item For $n>1$, we have  $f_0(n)=F_{n-2}$.
\end{enumerate}
\end{proposition}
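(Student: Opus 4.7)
The plan is to establish the structure of binary words satisfying $\mathcal R$ and then prove (1) by a suffix decomposition, after which (2) is a short induction.

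First I would describe the words precisely. In a binary word satisfying $\mathcal R$, every $1$ must start a maximal block of the form $1\,0^{a}$ with $a\geq 1$, and every $0$ must belong to such a block. Thus the words satisfying $\mathcal R$ are exactly the concatenations $1\,0^{a_1}\,1\,0^{a_2}\cdots 1\,0^{a_k}$ with $k\geq 0$ and every $a_j\geq 1$ (the empty word corresponds to $k=0$). In particular, the empty word is the unique word of length $0$, so $f_0(1)=1$; and neither $0$ nor $1$ alone fits the pattern, so $f_0(2)=0$, which matches the initial values claimed in~(1).

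For the recurrence in (1), I would fix $n\geq 2$ and count binary words of length $n$ satisfying $\mathcal R$, i.e., $f_0(n+1)$. Such a word is nonempty, so its last letter is $0$, and hence it ends in either $00$ or $10$. If it ends in $00$, the final block has length $\geq 3$, so removing the trailing $0$ yields a word of length $n-1$ that still satisfies $\mathcal R$; this map is a bijection onto the words of length $n-1$ satisfying $\mathcal R$, giving $f_0(n)$ possibilities. If it ends in $10$, then the final block is exactly $10$, and deleting this block produces a word of length $n-2$ still satisfying $\mathcal R$; this is a bijection onto all such words of length $n-2$, giving $f_0(n-1)$ possibilities. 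Adding the two disjoint cases gives $f_0(n+1)=f_0(n)+f_0(n-1)$, which is the asserted recurrence after reindexing.

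Part (2) then follows by straightforward induction: the Fibonacci numbers satisfy $F_{m+1}=F_m+F_{m-1}$ with $F_0=0$ and $F_1=1$, and the base cases match since $f_0(2)=0=F_0$ and $f_0(3)=f_0(2)+f_0(1)=1=F_1$, from which $f_0(n)=F_{n-2}$ propagates for all $n\geq 2$.

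The only potential obstacle is ensuring the suffix decomposition in the recurrence step is actually a bijection in both cases, i.e., that removing a trailing $0$ from a word ending in $00$ leaves a word still in the class (the key point being that the last block only shrinks from length $\geq 2$ to length $\geq 1$, so the $1$ at its start remains correctly followed by at least one $0$), and conversely that appending a $0$ to any word in the class of length $n-1$ produces a valid word of length $n$ ending in $00$. Once this is spelled out the rest is routine.
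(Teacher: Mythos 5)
Your proof is correct, and it takes a slightly different route from the paper's. The paper argues on the \emph{first} block: removing the leading subword $10\cdots 0$ gives the full-history relation $f_0(n+4)=f_0(n+2)+f_0(n+1)+\cdots$, and the two-term recurrence is then obtained by writing the same identity one index higher and subtracting (a telescoping trick), with the small values $f_0(3),f_0(4)$ checked by hand. You instead decompose at the \emph{end} of the word: a valid nonempty word ends in $00$ or in $10$, and deleting the trailing $0$ (resp.\ the final block $10$) gives bijections onto the valid words of length one (resp.\ two) less, which yields $f_0(n+2)=f_0(n+1)+f_0(n)$ directly, with the bijectivity point (the last block only shrinks from length $\geq 2$ to length $\geq 1$, and conversely appending $0$ or $10$ stays in the class) correctly flagged and justified. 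Your version avoids the infinite sum and the subtraction step, and even establishes the recurrence one index earlier (from $f_0(3)=f_0(2)+f_0(1)$ on), while the paper's first-block decomposition is the one that meshes with its general weighted-composition machinery; your induction for part (2), with base cases $f_0(2)=F_0$ and $f_0(3)=F_1$, is also spelled out more explicitly than the paper's one-line deduction. No gaps.
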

\begin{proof}
 \begin{enumerate}
 \item
  We have $f_0(1)=1$, since the empty word has length $0$. Next, $f_0(2)=0$, since no words of length $1$ satisfy $\mathcal R$. Also, $f_0(3)=1$, since $10$ is the only word of length $2$ satisfying $\mathcal R$.
 Next, $f_0(4)=1$, since $100$ is the only word of length $3$ satisfying $\mathcal R$. Assume that $n>1$. Then,
    \begin{equation*}f_0(n+4)=f_0(n+2)+f_0(n+1)+\cdots,\end{equation*}
    since a word of length greater than $3$ must begin with a subword of the form $10\ldots 0$.
    Analogously, we obtain
\begin{equation*}f_0(n+5)=f_0(n+3)+f_0(n+2)+\cdots.\end{equation*}
Comparing these two  equations, we get
\begin{equation*}f_0(n+5)=f_0(n+4)+f_0(n+3).\end{equation*}
\item
The formula follows from the preceding recurrence.
\end{enumerate}
\end{proof}
Since $f_0(1)=1$, and so $f_m(1)=1$, using Proposition \ref{alf} and (\ref{suma1}), we obtain the following combinatorial interpretations of $f_m$ and $c_m(n,k)$.
\begin{corollary}
\begin{enumerate}
\item
The number $c_m(n,k)$ is the number of words over the alphabet
 $\{0,1,\ldots,m+1\}$ of length $n-1$ having $k-1$ letters equal $m+1$ and satisfying  $\mathcal R$.
\item  The number $f_m(n)$ is the number of words of length $n-1$ over
the alphabet $\{0,1,\ldots,m\}$ satisfying $\mathcal R$.
\end{enumerate}
\end{corollary}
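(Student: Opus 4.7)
The plan is to prove both parts simultaneously by induction on $m$, invoking Proposition~\ref{alf} to obtain the statement about $c_m(n,k)$ and the summation identity~(\ref{suma1}) to obtain the statement about $f_m(n)$. The base case $m=0$ of the $f_m$ claim is precisely the preceding proposition, which identifies $f_0(n)$ with the number of binary words of length $n-1$ satisfying $\mathcal R$; note that the stated alphabet reduces to $\{0,1\}$ at $m=0$, matching the binary setting.

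For the inductive step, I would first note that since $f_0(1)=1$, the invert transform forces $f_{m-1}(1)=1$ as well, so the hypothesis of Proposition~\ref{alf} is in force. Assuming inductively that $f_{m-1}(n)$ counts the length-$(n-1)$ words over $\alpha=\{0,1,\ldots,m\}$ that satisfy $\mathcal R$, Proposition~\ref{alf} applied with the new letter $x=m+1$ yields that $c_m(n,k)$ enumerates the length-$(n-1)$ words over $\alpha\cup\{m+1\}$ in which $m+1$ appears exactly $k-1$ times and in which each of the $k$ maximal sub-blocks between consecutive occurrences of $m+1$ is an $\mathcal R$-word over $\alpha$.

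The step I expect to require the most care---though it is ultimately routine---is verifying that this block-wise condition is equivalent to the global condition $\mathcal R$ on the whole word. The key observation is that $\mathcal R$ constrains only the letters $0$ and $1$, while $m+1\notin\{0,1\}$; consequently no maximal $\{0,1\}$-substring of the full word can straddle an occurrence of $m+1$, and the maximal $\{0,1\}$-substrings of the full word are precisely the union of those of the individual blocks. Hence $\mathcal R$ holds globally if and only if it holds on every block, which establishes part~(1). Part~(2) then follows immediately from~(\ref{suma1}): summing $c_m(n,k)$ over $k$---equivalently, over the possible numbers of copies of $m+1$---enumerates all length-$(n-1)$ words over $\{0,1,\ldots,m+1\}$ satisfying $\mathcal R$, closing the induction.
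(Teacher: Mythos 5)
Your proposal is correct and follows essentially the paper's own route: the paper likewise just observes that $f_0(1)=1$ forces $f_m(1)=1$ and then cites Proposition~\ref{alf} together with (\ref{suma1}), while you merely make explicit the induction on $m$ and the verification that the block-wise condition between occurrences of the new letter $m+1$ is equivalent to the global condition $\mathcal R$ (which is the implicit content of applying Proposition~\ref{alf} here). One caveat: what you actually prove in part (2) is the version with alphabet $\{0,1,\ldots,m+1\}$ --- consistent with part (1), with $f_m(2)=m$, and with the paper's later ternary ($m=1$) and quaternary ($m=2$) corollaries --- whereas the corollary as printed says $\{0,1,\ldots,m\}$, which at $m=0$ would be $\{0\}$ rather than the $\{0,1\}$ you assert; this is evidently a typo in the statement rather than a gap in your argument.
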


We next derive an explicit formula for $c_1(n,k)$.
It is known that   $c_1(n,k)$ is the coefficient  of $x^n$ in the expansion of
$\left(\sum_{i=1}^\infty F_{i-2}x^i\right)^k$ into powers of $x$.

We consider the following auxillary initial function:
 \begin{equation*}\overline{f}_0(1)=0,\overline{f}_0(n)=1,(n>1).\end{equation*}
From~\cite[Proposition 23]{ja1}, we obtain $\overline{f}_1(n)=F_{n-1}$.
It is proved in~\cite[Proposition 13]{ja2} that
\[\overline{c}_2(n,k)={n-k-1\choose k-1},\left(k=1,2,\ldots,\left\lfloor\frac n2\right\rfloor\right),\]
and $\overline{c}_2(n,k)=0$ otherwise.

Using~\cite[Proposition 6]{ja3} yields
\[\overline{c}_3(n,k)=\sum_{i=k}^{\left\lfloor\frac n2\right\rfloor}{i-1\choose k-1}{n-i-1\choose i-1}.\]
Hence,
\begin{equation}\label{xy}\left(\sum_{i=1}^\infty F_{i-1}x^i\right)^k=\sum_{n=k}^\infty\overline{c}_3(n,k)x^n.\end{equation}
We let $X$ denote $\sum_{i=1}^\infty F_{i-1}x^i$.
 We have to expand the  expression $\left(\sum_{i=1}^\infty F_{i-2}x^i\right)^k,$
 which we denote by $Y$.
It follows that $Y=x^k(1+X)^k$. Hence,
\[Y=x^k\left(1+\sum_{i=1}^k{k\choose i}X^i\right)^k=\sum_{n=k}^\infty c_1(n,k)x^n.\]
Using  (\ref{xy}) yields
\[Y=x^k+\sum_{i=1}^k\sum_{j=i}^\infty{k\choose i}\overline{c}_3(j,i)x^{j+k}.\]
It is easy to see that, in the case $j+k=n$, the coefficient of $x^n$ on the right-hand side of this equation equals $\sum_{i=1}^k{k\choose i}\overline{c}_3(n-k,i)$.
We thus obtain
\begin{proposition}\label{p16} The following equations hold:
\begin{gather*}c_1(k,k)=1,\\
c_1(n,k)=\sum_{i=1}^k\sum_{j=i}^{\left\lfloor\frac{n-k}{2}\right\rfloor}{k\choose i}{j-1\choose i-1}{n-k-j-1\choose j-1},(n>k).\end{gather*}
\end{proposition}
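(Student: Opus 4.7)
The setup in the paragraphs leading into the proposition already pins down essentially the entire strategy: $c_1(n,k)$ is the coefficient of $x^n$ in $Y=\left(\sum_{i\geq 1}f_0(i)x^i\right)^k$, and the auxiliary function $\overline{f}_0$ supplies, via~(\ref{xy}), the generating-function identity $X^k=\sum_{n\geq k}\overline{c}_3(n,k)x^n$, where $X=\sum_{i\geq 1}F_{i-1}x^i$. The plan is to turn this preparation into a formula by direct coefficient extraction.

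First I would justify the factorisation $Y=x^k(1+X)^k$. Since $x(1+X)=x+\sum_{i\geq 1}F_{i-1}x^{i+1}=\sum_{j\geq 1}F_{j-2}x^j$ (using the convention $F_{-1}=1$ so that $f_0(1)=1$ is absorbed into the pattern $f_0(j)=F_{j-2}$), this series agrees termwise with $\sum_{i\geq 1}f_0(i)x^i$, and raising both sides to the $k$th power is then legitimate.

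Next I would expand by the binomial theorem and separate the $i=0$ piece:
\[
Y=x^k+\sum_{i=1}^k\binom{k}{i}x^k X^i.
\]
The lone term $x^k$ accounts for $c_1(k,k)=1$. For $n>k$, substituting $X^i=\sum_{m\geq i}\overline{c}_3(m,i)x^m$ and reading off the coefficient of $x^n$ (which forces $m=n-k$) yields
\[
c_1(n,k)=\sum_{i=1}^k\binom{k}{i}\overline{c}_3(n-k,i).
\]
Inserting the explicit expression $\overline{c}_3(n-k,i)=\sum_{j=i}^{\lfloor (n-k)/2\rfloor}\binom{j-1}{i-1}\binom{n-k-j-1}{j-1}$ displayed just above the proposition produces the stated nested sum.

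The argument presents no real obstacle; the nontrivial analytic content has been absorbed into the identities for $\overline{c}_3$ borrowed from~\cite[Proposition 13]{ja2} and~\cite[Proposition 6]{ja3}. What remains is mechanical coefficient extraction plus the bookkeeping to verify that the $i=0$ contribution of the binomial expansion matches $c_1(k,k)=1$ and does not interfere with the $n>k$ case.
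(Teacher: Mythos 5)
Your proposal is correct and matches the paper's own argument essentially step for step: writing the generating function of $f_0$ as $x(1+X)$ with $X=\sum_{i\geq 1}F_{i-1}x^i$, expanding $(1+X)^k$ binomially, invoking the identity~(\ref{xy}) for the powers $X^i$, and extracting the coefficient of $x^n$ (with the lone $x^k$ term giving $c_1(k,k)=1$). Your explicit handling of the convention $F_{-1}=1$ and of the $i=0$ term is if anything slightly more careful than the paper's write-up, which contains a stray exponent in the displayed expansion.
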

Using~\cite[Corollary 9]{ja1}, we easily obtain the following recurrence for $f_m$:
\[f_m(1)=1,f_m(2)=m,f_m(n+2)=(m+1)f_m(n+1)-(m-1)f_m(n).\]

Some particular cases are of note. In the case $m=1$, we obtain
\[f_1(1)=1,f_1(2)=1,f_1(n+2)=2f_1(n+1),(n>1),\]
which implies
\[f_1(1)=f_1(2)=1,f_1(n)=2^{n-2},(n>2).\]
We thus obtain the following property of powers of $2$.
\begin{corollary}
For $n\geq 2$, the number $2^{n-2}$ is the number of ternary words of length $n-1$ satisfying $\mathcal R$.
\end{corollary}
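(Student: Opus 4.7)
The plan is to combine two things that have just been established in the excerpt. First, the computation immediately above shows that specializing the recurrence $f_m(n+2)=(m+1)f_m(n+1)-(m-1)f_m(n)$ to $m=1$ kills the $f_m(n)$ coefficient, leaving $f_1(n+2)=2f_1(n+1)$ with $f_1(1)=f_1(2)=1$; a one-line induction on $n$ then yields $f_1(n)=2^{n-2}$ for $n\geq 2$. Second, the preceding corollary (applied at $m=1$) identifies $f_1(n)$ as the number of words of length $n-1$ over the ternary alphabet $\{0,1,2\}$ satisfying $\mathcal R$. Chaining the two identifications is the entire argument.

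The only point that requires any thought is matching the initial data to the combinatorial model. For $n=2$ one checks $f_1(2)=1$ directly: since $\mathcal R$ forces every occurrence of $0$ or $1$ to sit inside a block of the form $10\cdots 0$, no one-letter word containing $0$ or $1$ qualifies, so the singleton $2$ is the unique admissible word. For $n=3$ the only admissible ternary words of length two are $22$ and $10$, matching $f_1(3)=2$. From here the recurrence $f_1(n+2)=2f_1(n+1)$ propagates the formula without further calculation.

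I do not expect a genuine obstacle. The real work was done upstream: the derivation of the $f_m$-recurrence from~\cite[Corollary 9]{ja1} and the combinatorial interpretation of $f_m$ obtained by combining Proposition~\ref{alf} with~(\ref{suma1}). The present corollary is just a read-off of those two facts at $m=1$, the former collapsing the recurrence to a pure geometric doubling and the latter translating the resulting power of two into the ternary-word count.
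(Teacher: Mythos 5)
Your proposal is correct and is essentially the paper's own argument: specialize the recurrence $f_m(n+2)=(m+1)f_m(n+1)-(m-1)f_m(n)$, $f_m(1)=1$, $f_m(2)=m$ to $m=1$, obtain $f_1(n)=2^{n-2}$ for $n\geq 2$, and read the count off from the combinatorial interpretation of $f_m$ (with the alphabet correctly taken to be $\{0,1,2\}$ at $m=1$). Your direct check of the small cases $n=2,3$ is a harmless addition that only confirms the initial conditions.
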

It yields that the following Euler-type identity holds:
\begin{identity} For $n>2$, the number of binary words of length $n-2$ is the
 number of ternary words of length $n-1$, in which $0$ and $1$ appear only in a run of the
 form $1i$, where $i$ is the run of zeros of length $i\geq 1$.
\end{identity}
 From Propositions \ref{p16} and (\ref{suma1}), we obtain the following identity for the Mersen numbers:
\begin{identity}
\begin{equation*}2^{n-2}-1=\sum_{k=1}^{n}\sum_{i=1}^k\sum_{j=i}^{\left\lfloor\frac{n-k}{2}\right\rfloor}{k\choose i}{j-1\choose i-1}{n-k-j-1\choose j-1},(n>2).\end{equation*}
\end{identity}

We now consider the second particular case $m=2$. We have
\[f_2(1)=1,f_2(2)=2,f_2(n+2)=3f_2(n+1)-f_2(n),\] which is the recurrence
for Fibonacci numbers $F_{2n-1}$. We thus have
\begin{corollary}
The Fibonacci number $F_{2n-1}$ is the number of quaternary words of length $n-1$ satisfying $\mathcal R$.
\end{corollary}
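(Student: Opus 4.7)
The plan is to combine two already-available ingredients: the second-order linear recurrence for $f_2$ that was derived just above the corollary, and the combinatorial interpretation of $f_m(n)$ as counting words of length $n-1$ over $\{0,1,\ldots,m+1\}$ satisfying $\mathcal R$ (the alphabet size is $m+2$, as is clear from the Case~4 description and the $m=1$ subcase where $f_1(n)$ enumerates ternary words). For $m=2$, the alphabet is $\{0,1,2,3\}$, i.e.\ quaternary, so all that remains is to identify the integer sequence produced by the recurrence.

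First I would write down the specialization at $m=2$, which the preceding paragraph already gives:
\[
f_2(1)=1,\quad f_2(2)=2,\quad f_2(n+2)=3f_2(n+1)-f_2(n),\ (n\geq 1).
\]
Then I would verify that the odd-indexed Fibonacci numbers $G_n:=F_{2n-1}$ satisfy the very same initial conditions and recurrence. The initial values are immediate: $F_1=1$ and $F_3=2$. For the recurrence, two applications of $F_{k+2}=F_{k+1}+F_k$ give
\[
F_{2n+3}=F_{2n+2}+F_{2n+1}=2F_{2n+1}+F_{2n},
\]
and, since $F_{2n}=F_{2n+1}-F_{2n-1}$, one obtains $F_{2n+3}=3F_{2n+1}-F_{2n-1}$, i.e.\ $G_{n+2}=3G_{n+1}-G_n$. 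A one-line induction on $n$ then yields $f_2(n)=F_{2n-1}$ for every $n\geq 1$.

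Finally, I would invoke the combinatorial interpretation established in the corollary preceding Proposition~\ref{p16}: $f_m(n)$ counts the words of length $n-1$ over $\{0,1,\ldots,m+1\}$ satisfying $\mathcal R$. Taking $m=2$ and substituting the identification $f_2(n)=F_{2n-1}$ from the previous step yields exactly the claim that $F_{2n-1}$ enumerates the quaternary words in question. There is no real obstacle here; the only content is the Fibonacci identity $F_{2n+3}=3F_{2n+1}-F_{2n-1}$, which is routine, and the matching of initial conditions.
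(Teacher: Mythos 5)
Your proposal is correct and follows essentially the same route as the paper: specialize the recurrence $f_m(n+2)=(m+1)f_m(n+1)-(m-1)f_m(n)$ at $m=2$, identify the resulting sequence with $F_{2n-1}$, and combine this with the combinatorial interpretation of $f_2(n)$ as counting words of length $n-1$ over $\{0,1,2,3\}$ satisfying $\mathcal R$. Your explicit verification of the identity $F_{2n+3}=3F_{2n+1}-F_{2n-1}$ and the matching of initial values merely fills in a step the paper states without proof, and you correctly read the alphabet as $\{0,1,\ldots,m+1\}$ (size $m+2$), consistent with the case description and the $m=1$ ternary subcase.
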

Calculating values for $c_2(n,k)$, we obtain
\begin{identity}
\begin{equation*}F_{2n-1}=\sum_{k=1}^n\sum_{i=k}^{n}\sum_{t=0}^i\sum_{j=t}^{\left\lfloor\frac{n-i}{2}\right\rfloor}
{i-1\choose k-1}{i\choose t}{j-1\choose t-1}{n-i-j-1\choose j-1}.\end{equation*}
\end{identity}
\begin{remark} Using (\ref{cm1}) and  (\ref{suma1}),
 we  obtain the explicit formulas  for $c_m(n,k)$ and $f_m(n)$.
\end{remark}
\section{Case 5}
We let $\mathcal R$ denote the given condition. Again, we first consider the binary words.
 \begin{proposition}
 \begin{enumerate}
 \item The following formula holds:
 \begin{gather*}f_0(1)=1,f_0(2)=0,f_0(3)=1;\\
 f_0(n+3)=f_0(n+1)+f_0(n),(n\geq 1).\end{gather*}
\item We have $f_0(n)=p_{n+2}$, where $p_n$ is the $n$th Padovan number.
\end{enumerate}
\end{proposition}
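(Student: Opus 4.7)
The plan is to prove part (1) first by inspection for the initial values and by a block-stripping bijection for the recurrence, then deduce part (2) from the known Padovan recurrence together with matching initial conditions.

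For the initial values $f_0(1)=1, f_0(2)=0, f_0(3)=1$, I would enumerate directly: the empty word vacuously satisfies $\mathcal R$, so $f_0(1)=1$; neither single letter $0$ nor single letter $1$ satisfies $\mathcal R$ (runs of length one are neither even nor divisible by $3$), so $f_0(2)=0$; and among the four length-$2$ words only $00$ is admissible, so $f_0(3)=1$.

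For the recurrence, let $d(n):=f_0(n+1)$ be the number of length-$n$ binary words satisfying $\mathcal R$; the claim becomes $d(n+2)=d(n)+d(n-1)$ for $n\geq 1$. Since $n+2\geq 3$, such a word $w$ is nonempty, so I would split $d(n+2)=a(n+2)+b(n+2)$ where $a,b$ count words beginning with $0$ and $1$, respectively. The core step is to produce two bijections by stripping the smallest admissible prefix. If $w$ starts with $0$, its first block has even length $2k\geq 2$; removing the leading $00$ yields a word of length $n$ which is still valid (either the first $0$-block shortens to $0^{2(k-1)}$, still of even length, or disappears entirely, leaving a word that is empty or starts with $1$). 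The inverse, prepending $00$ to any valid length-$n$ word, preserves $\mathcal R$ for the same reason, giving $a(n+2)=d(n)$. Analogously, if $w$ starts with $1$ the first block has length $3k\geq 3$; stripping the leading $111$ gives a bijection with valid words of length $n-1$, so $b(n+2)=d(n-1)$. Summing gives the recurrence.

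For part (2), the recurrence $f_0(n+3)=f_0(n+1)+f_0(n)$ is exactly the Padovan recursion $p_{n+3}=p_{n+1}+p_n$, so it suffices to compare three consecutive initial values against the standard Padovan indexing (in which $p_3=1,\,p_4=0,\,p_5=1$); since both sequences agree at these three terms and satisfy the same three-term linear recurrence, $f_0(n)=p_{n+2}$ follows by induction. I do not expect a real obstacle here—the only delicate point is the indexing convention for $p_n$, and the combinatorial work is entirely concentrated in the two bijections of the previous paragraph.
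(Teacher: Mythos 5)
Your proof is correct and takes essentially the same route as the paper: the paper dismisses part (1) as ``easy to prove,'' and your prefix-stripping bijection (removing a leading $00$ or $111$) is exactly the decomposition the paper itself uses in the combinatorial argument for the subsequent corollary on $f_m$. Part (2), identifying the recurrence with the Padovan recursion and matching three consecutive initial values under the indexing $p_3=1$, $p_4=0$, $p_5=1$, is precisely the paper's reasoning.
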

\begin{proof} The first statement is easy to prove. Since $(1)$ is essentially
 the recurrence for the Padovan numbers, the statement $(2)$ is true.
\end{proof}
   This means that the Padovan
number $p_{n+2}$ is the number of binary words
of length $n-1$ in which $0$ appears in runs of even length, while $1$ appears in
runs, the lengths of which are divisible by  $3$. This means that the Padovan numbers count
the compositions into parts $2$ and $3$, which is a well-known.
\begin{corollary}\label{fib}
\begin{enumerate}
\item
 The function $f_m$ satisfies the following recurrence:
\begin{gather*}f_m(1)=1,f_m(2)=m,f_m(3)=m^2+1,\\f_m(n+3)=mf_m(n+2)+f_m(n+1)+f_m(n),(n>1).\end{gather*}
\item Then, $f_m(n)$ is the number of words of length $n-1$ over $\{0,1,\ldots,m+1\}$ satisfying $\mathcal R$.
\item  Also, $c_m(n,k)$ is the number of words of length $n-1$ over $\{0,1,\ldots,m+1\}$ having $k-1$ letters equal to $m+1$, and satisfying $\mathcal R$.
\end{enumerate}
\end{corollary}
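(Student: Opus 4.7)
My plan is to verify the three parts in order: deduce (1) from the invert-transform framework (as in Corollaries \ref{co1} and \ref{co2}), establish (2) by a direct combinatorial recursion on word length, and then obtain (3) as an instance of Proposition~\ref{alf}.

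For (1), the generating function of the initial sequence is $F_0(x)=x/(1-x^2-x^3)$, and the standard invert-transform identity from~\cite[Corollary 9]{ja1} yields $F_m(x)=F_0(x)/(1-mF_0(x))=x/(1-mx-x^2-x^3)$. Reading off coefficients in $F_m(x)(1-mx-x^2-x^3)=x$ gives $f_m(1)=1$, $f_m(2)=m$, $f_m(3)=m^2+1$, and the stated three-term recurrence thereafter.

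For (2), let $D(\ell)$ count words of length $\ell$ over $\{0,1,\ldots,m+1\}$ satisfying $\mathcal R$. The initial values are immediate: $D(0)=1$; $D(1)=m$, because $0$ and $1$ cannot appear as singleton runs, leaving only the $m$ letters of $\{2,\ldots,m+1\}$; and $D(2)=m^2+1$, comprising the $m^2$ words in $\{2,\ldots,m+1\}^2$ together with the single word $00$ (whereas $11$ is excluded, since $2$ is not divisible by $3$). For $\ell\ge 3$, I claim that every $\mathcal R$-valid word admits a unique first-block decomposition of one of three types, each leaving a shorter $\mathcal R$-valid word: (a) a single letter from $\{2,\ldots,m+1\}$, contributing $m\,D(\ell-1)$; (b) the prefix $00$, contributing $D(\ell-2)$; or (c) the prefix $111$, contributing $D(\ell-3)$. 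The subtle check is that, when the leading $0$-run has length $2k$ with $k\ge 2$, stripping the first two $0$'s leaves a leading run of length $2(k-1)$, still even; analogously, stripping $111$ from a leading $1$-run of length $3k$ leaves a leading run of length $3(k-1)$, still a multiple of $3$. Hence the suffix is valid, giving $D(\ell)=mD(\ell-1)+D(\ell-2)+D(\ell-3)$, which matches the recurrence of (1) under the substitution $f_m(n)=D(n-1)$.

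For (3), since $f_m(1)=1$ for every $m$, Proposition~\ref{alf} applies directly with $\alpha=\{0,1,\ldots,m\}$ and the adjoined letter $x=m+1$: it identifies $c_m(n,k)$ as the number of words of length $n-1$ over $\alpha\cup\{x\}$ containing exactly $k-1$ copies of $x$. Because $\mathcal R$ constrains only runs of $0$ and $1$, interleaving occurrences of $m+1$ does not disturb validity, and the count is precisely as claimed. The principal obstacle is the uniqueness-of-decomposition step in (2): the long-run cases for $0$ and $1$ must be handled carefully so that the recursion neither overcounts nor omits words whose initial run exceeds the minimal block length.
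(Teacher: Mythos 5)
Your proof is correct and takes essentially the same route as the paper: for (2) you use the identical first-block decomposition (a letter from $\{2,\ldots,m+1\}$, the prefix $00$, or the prefix $111$) that the paper gives as its combinatorial argument, and (3) is obtained from Proposition~\ref{alf} exactly as in the paper. The only cosmetic difference is in (1), where you rederive the recurrence from the invert-transform generating function $F_m=F_0/(1-mF_0)$ while the paper simply cites Theorem~6 of \cite{ja1} (note your citation of Corollary~9 there is the second-order result; the third-order statement is the theorem).
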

\begin{proof}
 The claim $(1)$ easily follows from~\cite[Theorem 6]{ja1}.
The claims $(2)$ and $(3)$ follow from Proposition \ref{alf}.

We add a short combinatorial proof for $(2)$.
Equation $f_m(1)=1$ means that the empty word satisfies $\mathcal R$. Further, $f_m(2)=m$
means that a word of length $1$ may consist of any letter except $0$ and $1$.
 Next, $f_m(3)=m^2+1$ means that a word of length $2$ may consist of pairs
  from $\{2,3,\ldots,m+1\}$, which are $m^2$ in number, plus the word $00$.
Finally, a word of length $n>2$ may begin with any letter from  $\{2,3,\ldots,m+1\}$,
 or from $00$, or from $111$.
\end{proof}
The case $m=1$ in Corollary \ref{fib} is the recurrence for Tribonacci numbers. Hence,
\begin{corollary} The sequence $1, 1, 2, 4, 7,\ldots$ of the Tribonacci numbers is the invert transform of the sequence  $1,0,1,1,1,2,\ldots$ of the Padovan numbers.

 Also, Tribonacci numbers count ternary words satisfying $\mathcal R$.
\end{corollary}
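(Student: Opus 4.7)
The plan is to specialize Corollary~\ref{fib} to $m=1$. Substituting $m=1$ into the recurrence in Corollary~\ref{fib}(1) yields
\[
f_1(1)=1,\quad f_1(2)=1,\quad f_1(3)=2,\quad f_1(n+3)=f_1(n+2)+f_1(n+1)+f_1(n),
\]
which is precisely the standard Tribonacci recurrence with the standard initial values, so $f_1(n)$ equals the $n$-th Tribonacci number and the sequence begins $1, 1, 2, 4, 7, \ldots$ as claimed.

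For the invert transform assertion, I would invoke the definition: $f_m$ is by construction the $m$-th invert transform of $f_0$, so in particular $f_1$ is the invert transform of $f_0$. Since the previous proposition identifies $f_0(n)=p_{n+2}$, the sequence $f_0(1),f_0(2),f_0(3),\ldots$ is exactly the Padovan sequence $1,0,1,1,1,2,\ldots$. Combined with the Tribonacci identification of $f_1$ above, this gives the first claim of the corollary.

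For the second claim, I would simply specialize Corollary~\ref{fib}(2) to $m=1$: the alphabet $\{0,1,\ldots,m+1\}$ becomes $\{0,1,2\}$, so $f_1(n)$ counts ternary words of length $n-1$ satisfying $\mathcal{R}$, which together with the Tribonacci identification completes the proof.

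There is no real obstacle here; the entire argument is a direct substitution $m=1$ into results already established. The only point that merits a sentence of care is confirming that the initial values $f_1(1),f_1(2),f_1(3)=1,1,2$ match the Tribonacci indexing convention adopted in the statement (namely $1,1,2,4,7,\ldots$), and that the Padovan indexing shift $f_0(n)=p_{n+2}$ is carried over consistently; both checks are immediate from the first few terms.
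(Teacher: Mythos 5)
Your proposal is correct and matches the paper's argument: the paper likewise obtains the corollary by setting $m=1$ in Corollary~\ref{fib}, recognizing the resulting recurrence $f_1(1)=1$, $f_1(2)=1$, $f_1(3)=2$, $f_1(n+3)=f_1(n+2)+f_1(n+1)+f_1(n)$ as the Tribonacci recurrence, with the invert-transform and ternary-word statements following from the definition of $f_1$ and Corollary~\ref{fib}(2). No gaps; your extra check of the indexing conventions is a reasonable touch but not a departure from the paper.
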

Finally, we calculate $c_1(n,k)$.
We define the arithmetic function $\overline{f}_0$ such that $\overline{f}_0(2)=\overline{f}_0(3)=1$, and $\overline{f}_0(n)=0$ otherwise. It follows from~\cite[Propositon 5]{ja2}  that $\overline{c}_1(n,k)={k\choose n-2k}.$ Also, using~\cite[Theorem 6]{ja1}, we obtain
 \begin{gather*}\overline{f}_1(1)=0,\overline{f}_1(2)=1,\overline{f}_1(3)=1,\\
 \overline{f}_1(n+3)=\overline{f}_0(n+1)+\overline{f}_0(n).\end{gather*}
This implies that $\overline{f}_1(n)=f_0(n-1),(n>1)$. The sequence $f_0(1),f_0(2),\ldots$ is thus obtained by inserting $1$ at the beginning of the sequence
$\overline{f}_1(1),\overline{f}_1(2),\ldots$.

Using~\cite[Equation (10)]{ja3}, we obtain
\begin{equation*}\overline{c}_2(n,k)=\sum_{i=k}^n{i-1\choose k-1}\cdot{i\choose n-2\cdot i}.\end{equation*}
On the other hand,~\cite[Proposition 2]{ja3} yields
\begin{equation}\label{jjj}\left(\sum_{i=1}^\infty\overline{f}_1(i)x^i\right)^k=
\sum_{n=k}^\infty\overline{c}_2(n,k)x^n.\end{equation}

To obtain an explicit formula for $c_1(n,k)$, we need to expand the expression $X$ given by
$X=\left(\sum_{i=1}^\infty f_0(i)x^i\right)^k$ into powers of $x$.
We have \begin{equation*}X=\left(x+\sum_{i=2}^\infty f_0(i)x^i\right)^k=(x+xY)^k,\] where
 $Y=\sum_{i=1}^\infty \overline{f}_1(i)x^i$.
Hence, \begin{equation*}X=x^k\sum_{i=0}^k{k\choose i}Y^i.\end{equation*}
Applying Equation(\ref{jjj}) yields
\begin{equation*}X=\sum_{i=0}^k{k\choose i}\sum_{j=i}^\infty\overline{c}_2(j,i)x^{j+k}.\end{equation*}
Taking $n=j+k$, we get
\begin{proposition} The following formula holds:
\begin{equation*}c_1(n,k)=\sum_{i=0}^k\sum_{j=i}^{n-k}{k\choose i}{j-1\choose i-1}{j\choose n-k-2j}.\end{equation*}
\end{proposition}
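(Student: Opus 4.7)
The plan is to finish the generating-function computation that has already been set up just above the statement. At that point, the series $X=\left(\sum_{i\geq 1} f_0(i)x^i\right)^k$ has been rewritten as $X=\sum_{i=0}^{k}\binom{k}{i}\sum_{j\geq i}\overline{c}_2(j,i)\,x^{j+k}$, so the only remaining task is to substitute the explicit form of $\overline{c}_2(j,i)$ and read off the coefficient of $x^n$.

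The first step is to recall the formula
\[\overline{c}_2(j,i)=\sum_{s=i}^{j}\binom{s-1}{i-1}\binom{s}{j-2s},\]
which was obtained a few lines earlier by applying (\ref{cm1}) to the auxiliary sequence $\overline{f}_0$ together with $\overline{c}_1(n,k)=\binom{k}{n-2k}$. The second step is to set $j+k=n$ in the expansion of $X$; this forces $j=n-k$, so the coefficient of $x^n$ becomes $\sum_{i=0}^{k}\binom{k}{i}\,\overline{c}_2(n-k,i)$. Substituting the closed form for $\overline{c}_2$ and renaming the inner summation variable $s\mapsto j$ then yields exactly the triple sum in the statement.

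The only delicate point is matching the $i=0$ contribution to the boundary value. In $(1+Y)^k$ the term $Y^0=1$ contributes the single monomial $x^k$ to $X$, accounting for $c_1(k,k)=1$; under the conventions $\binom{-1}{-1}=1$ and $\binom{j-1}{-1}=0$ for $j\geq 1$, this is reproduced exactly by the $(i,j)=(0,0)$ summand on the right-hand side, while all other $i=0$ summands vanish. With that bookkeeping in place, the rest of the argument is routine index-matching, and there is no real obstacle remaining: the substantive content was already absorbed into the decomposition $X=x^k(1+Y)^k$ based on the shift relation between $f_0$ and $\overline{f}_1$, and the present proposition is only the final coefficient-extraction step.
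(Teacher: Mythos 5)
Your proposal is correct and follows essentially the same route as the paper: the paper's derivation is precisely the expansion $X=x^k(1+Y)^k$, the substitution of $\overline{c}_2(j,i)=\sum_{s=i}^{j}\binom{s-1}{i-1}\binom{s}{j-2s}$ (from (\ref{cm1}) with $m=2$ applied to $\overline{f}_0$), and the extraction of the coefficient of $x^n$ by setting $j=n-k$. Your extra remark about the $i=0$ term and the boundary value $c_1(k,k)=1$ is sound bookkeeping that the paper leaves implicit.
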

We thus obtain the following identity for the Tribonacci numbers $T_n$:
\begin{identity}
\begin{equation*} T_n=\sum_{k=1}^n\sum_{i=0}^k\sum_{j=i}^{n-k}{k\choose i}{j-1\choose i-1}{j\choose n-k-2j}.\end{equation*}
\end{identity}
\begin{remark} Using (\ref{cm1}) and  (\ref{suma1}), we  obtain explicit formulas  for $c_m(n,k)$ and $f_m(n)$.
\end{remark}

\end{document}